\newtheorem{thm}{Theorem}[section]
\newtheorem{lemma}[thm]{Lemma}
\newtheorem{prop}[thm]{Proposition}
\newtheorem{remark}[thm]{Remark}
\newtheorem{remarks}[thm]{Remarks}
\newtheorem{definition}[thm]{Definition}
\newtheorem{notation}[thm]{Notation}
\newtheorem{example}[thm]{Example}
\newtheorem{examples}[thm]{Examples}
\newcommand{\Remark}[1]{\begin{remark}{\rm #1}\end{remark}}
\newcommand{\Definition}[1]{\begin{definition}{\rm #1}\end{definition}}
\newcommand{\Notation}[1]{\begin{notation}{\rm #1}\end{notation}}
\newcommand{\Example}[1]{\begin{example}{\rm #1}\end{example}}
\newcommand{\Examples}[1]{\begin{examples}{\rm #1}\end{examples}}
\renewcommand{\Box}{\square}
\newcommand{\N}{{\mathbb N}}
\newcommand{\Co}{{\mathbb C}}
\newcommand{\R}{{\mathbb R}}
\newcommand{\K}{{\mathbb K}}
\newcommand{\DD}{\mathcal{D}}
\newcommand{\id}{{\rm id}}
\DeclareMathAlphabet{\mathpzc}{OT1}{pzc}{m}{it}
\newcommand{\grad}{\operatorname{grad}}
\newcommand{\Hom}{\operatorname{Hom}}
\newcommand{\tr}{\operatorname{tr}}
\newcommand{\dt}{{\,\,{dt}}}
\newcommand{\dV}{{\,\,\operatorname{dV}}}
\renewcommand{\div}{\operatorname{div}}
\newcommand{\supp}{\operatorname{supp}}
\newcommand{\ssupp}{\operatorname{sing\,supp}}
\newcommand{\la}{\langle}       
\newcommand{\ra}{\rangle}
\newcommand{\Csc}{C_{\mathrm{sc}}^\infty}
\renewcommand{\phi}{\varphi}
\newcommand{\lra}{\longrightarrow}
\def \be{\begin{eqnarray*}}
\def \ee{\end{eqnarray*}}
\def \ben{\begin{enumerate}}
\def \een{\end{enumerate}}
\def \beit{\begin{itemize}}
\def \eeit{\end{itemize}}
\def \bui#1#2{\mathrel{\mathop{\kern 0pt#1}\limits^{#2}}}
\def \buil#1#2{\mathrel{\mathop{\kern 0pt#1}\limits_{#2}}}
\newcounter{Abbildung}
\newcommand{\abb}[1]{\stepcounter{Abbildung}\begin{center}{\footnotesize
      Fig.\ \theAbbildung: #1}\end{center}}
\title{Linear wave equations on Lorentzian manifolds} 
\author{Christian B\"ar}
\date{\today}
\begin{document}

\maketitle

\begin{abstract}
We summarize the analytic theory of linear wave equations on globally
hyperbolic Lorentzian manifolds.
\end{abstract}

\tableofcontents

\section{Introduction}

In General Relativity \index{general relativity} spacetime \index{spacetime} is modelled by a Lorentzian manifold, \index{Lorentzian manifold}
see e.~g.\ \cite{EH,Wa}.
Many physical phenomena, such as electro-magnetic
radiation,\index{electro-magnetic radiation} are described by solutions to certain linear wave
equations defined on this spacetime manifold.
Thus a good understanding of the theory of wave equations is crucial.
This includes initial value problems (the Cauchy problem), fundamental
solutions, and inverse operators (Green's operators).
The classical textbooks on partial differential equations contain the relevant
results for small domains in Lorentzian manifolds or for very special
manifolds such as Minkowski space.

In this text we summarize the global analytic results obtained in \cite{BGP},
see also Leray's unpublished lecture notes \cite{Leray} and Choquet-Bruhat's
exposition \cite{C-B}.
In order to obtain a good solution theory one has to impose certain geometric
conditions on the underlying manifold.
The situation is similar to the study of elliptic operators \index{elliptic operator}on Riemannian
manifolds.\index{Riemannian manifold}
In order to ensure that the Laplace-Beltrami operator \index{Laplace-Beltrami operator}on a Riemannian manifold
$M$ is essentially self-adjoint \index{essentially self-adjoint}one may make the natural assumption that $M$
be complete.
Unfortunately, there is no good notion of completeness \index{completeness}for Lorentzian
manifolds.
It will turn out that the analysis of wave operators works out nicely if one
assumes that the underlying Lorentzian manifold be globally hyperbolic. 
Completeness of Riemannian manifolds and global hyperbolicity of Lorentzian
manifolds are indeed related.
If $(S,g_0)$ is a Riemannian manifold, then the Lorentzian cylinder \index{Lorentzian cylinder}$M=\R\times
S$ with product metric $g= -dt^2 + g_0$ is globally hyperbolic if and only if
$(S,g_0)$ is complete.

We will start by collecting some material on distributional sections in vector
bundles.
Then we will summarize the theory of globally hyperbolic Lorentzian manifolds.
Then we will define wave operators, also called normally hyperbolic operators,
and give some examples.
After that we consider the basic initial value problem, the Cauchy problem.
It turns out that on a globally hyperbolic manifold solutions exist and are
unique.
They depend continuously on the initial data.
The support of the solutions can be controlled which is physically nothing
than the statement that a wave can never propagate faster than with the speed
of light.
In the subsequent section we use the results on the Cauchy problem to show
existence and uniqueness of fundamental solutions.
This is closely related to the existence and uniqueness of Green's operators.

The author is very grateful for many helpful discussions with colleagues
including 
Helga Baum,
Olaf M\"uller,
Nicolas Ginoux,
Frank Pf\"affle,
and Miguel S\'anchez.
The author also thanks the Deutsche Forschungsgemeinschaft for financial
support.

\section{Distributional sections in vector bundles}
Let us start by giving some definitions and by fixing the
terminology for distributions on manifolds.
We will confine ourselves to those facts that we will actually need later on.
A systematic and much more complete introduction may be found e.~g.\ in
\cite{FL1}.

\subsection{Preliminaries on distributional sections}

Let $M$ be a manifold equipped with a smooth volume density $\dV$. 
Later on we will use the volume density induced by a Lorentzian metric
but this is irrelevant for now.
We consider a real or complex vector bundle $E\to M$.
We will always write $\K=\R$ or $\K=\Co$ depending on whether $E$ is real or
complex. 
The space of compactly supported smooth sections in $E$ will be denoted by
$\mathcal{D}(M,E)$. \index{compactly supported smooth sections}
We equip $E$ and the cotangent bundle $T^*M$ with connections, both denoted by
$\nabla$. \index{connection}
They induce connections on the tensor bundles $T^*M\otimes\cdots\otimes T^*M
\otimes E$, again denoted by $\nabla$.
For a continuously differentiable section $\phi\in C^1(M,E)$ the covariant
derivative is a continuous section in $T^*M\otimes E$, $\nabla\phi\in
C^0(M,T^*M\otimes E)$.
More generally, for $\varphi\in C^k(M,E)$ we get $\nabla^k\phi\in
C^0(M,\underbrace{T^*M\otimes\cdots\otimes T^*M}_{k\,\, \mathrm{factors}}\otimes E)$.

We choose an auxiliary Riemannian metric on $T^*M$ and an auxiliary Riemannian
or Hermitian metric on $E$ depending on whether $E$ is real or complex.
This induces metrics on all bundles $T^*M\otimes\cdots\otimes
T^*M\otimes E$.
Hence the norm of $\nabla^k\phi$ is defined at all points of $M$.

For a subset $A \subset M$ and $\varphi\in C^k(M,E)$ we define the $C^k$-norm
by
\begin{equation}
\|\varphi\|_{C^k(A)} := \max_{j=0,\ldots,k} \;
\sup_{x\in A}\,|\nabla^j\varphi(x)| .
\label{defCkNorm}
\end{equation}
If $A$ is compact, then different choices of the metrics and the
connections yield equivalent norms $\|\cdot\|_{C^k(A)}$.
For this reason there will be no need to explicitly specify the
metrics and the connections.

The elements of $\DD(M,E)$ are referred to as test sections \index{test sections}in $E$.
We define a notion of convergence of test sections.\index{test sections!convergence of}

\Definition{
Let $\varphi,\varphi_n\in \mathcal{D}(M,E)$. 
We say that the sequence $(\varphi_n)_n$ {\it converges to} $\varphi$ {\it in} 
$\mathcal{D}(M,E)$ if the following two conditions hold:
\begin{enumerate}
 \item There is a compact set $K\subset M$ such that the supports of $\varphi$ and of all
 $\varphi_n$ are contained in $K$, i.~e.\ $\supp(\varphi), \supp(\varphi_n)\subset K$ for 
 all $n$.
 \item The sequence $(\varphi_n)_n$ converges to $\varphi$ in all
 $C^k$-norms over $K$, i.~e.\ for each $k\in\N$
$$
\|\varphi-\varphi_n\|_{C^k(K)} \buil{\lra}{n\to\infty} 0.
$$
\end{enumerate}
}

We fix a finite-dimensional $\K$-vector space $W$.
Recall that $\K=\R$ or $\K=\Co$ depending on whether $E$ is real or complex.
Denote by $E^*$ the vector bundle over $M$ dual to $E$.

\Definition{\label{def:distr}
A $\K$-linear map $F:\mathcal{D}(M,E^*)\to W$ is called a 
{\it distribution in $E$ with values in $W$} or a {\it distributional section
  in $E$ with values in $W$} \index{distributional section}if it is continuous in the sense  
that for all convergent sequences $\varphi_n\to\varphi$ in
$\mathcal{D}(M,E^*)$  one has $ F[\varphi_n]\to F[\varphi]$.
We write $\DD'(M,E,W)$ for the space of all $W$-valued distributions in $E$. 
}

Note that since $W$ is finite-dimensional all norms $|\cdot|$
on $W$ yield the same topology on $W$.
Hence there is no need to specify a norm on $W$ for Definition~\ref{def:distr}
to make sense.
Note moreover, that distributional sections in $E$ act on test sections in $E^*$.

\Example{
Pick a  bundle $E\to M$ and a point $x\in M$.
The {\it delta-distribution}\index{delta-distribution} $\delta_x$ is a distributional section in $E$
with values in $E_x^*$. 
For $\varphi\in\mathcal{D}(M,E^*)$ it is defined by
\[ \delta_x[\varphi]=\varphi(x). \]
}

\Example{\label{ex:locint}
Every locally integrable section $f\in L^1_{\mathrm{loc}}(M,E)$ can be 
regarded as a $\K$-valued distribution in $E$ by setting for any 
$\varphi\in\mathcal{D}(M,E^*)$ 
\[ f[\varphi]:=\int_M \varphi(f) \dV.
\]
Here $\phi(f)$ denotes the $\K$-valued $L^1$-function with compact support on $M$
obtained by pointwise application of $\phi(x)\in E_x^*$ to $f(x)\in E_x$.
}

\subsection{Differential operators acting on distributions}
\label{subseq:DiffOpDist}

Let $E$ and $F$ be two $\K$-vector bundles over the manifold $M$, $\K=\R$ or
$\K=\Co$. 
Consider a linear differential operator $P:C^\infty(M,E)\to
C^\infty(M,F)$.\index{linear differential operator}
There is a unique linear differential operator $P^*:C^\infty(M,F^*)\to
C^\infty(M,E^*)$ called the {\em formal adjoint of} $P$\index{linear differential operator!formal adjoint of} such that
for any $\varphi\in \DD(M,E)$ and $\psi\in\DD(M,F^*)$
\begin{equation}
\int_M \psi(P\varphi)\, \dV =
\int_M (P^*\psi)(\varphi)\, \dV .
\label{formaladjoint}
\end{equation}
If $P$ is of order $k$, then so is $P^*$ and (\ref{formaladjoint}) holds for 
all $\varphi\in C^k(M,E)$ and $\psi\in C^k(M,F^*)$ such that
$\supp(\varphi)\cap\supp(\psi)$ is compact.
With respect to the canonical identification $E=(E^*)^*$ we have $(P^*)^*=P$.

Any linear differential operator $P:C^\infty(M,E)\to C^\infty(M,F)$ extends 
canonically to a linear operator $P:\DD'(M,E,W)\to\DD'(M,F,W)$\index{linear differential operator!acting on distributions} by
$$
(PT)[\varphi]:=T[P^*\varphi]
$$
where $\phi\in\DD(M,F^*)$.
If a sequence $(\varphi_n)_n$ converges in $\mathcal{D}(M,F^*)$ to $0$, 
then the sequence $(P^*\varphi_n)_n$ converges to $0$ as well 
because $P^*$ is a differential operator.
Hence $(PT)[\varphi_n] = T[P^*\varphi_n] \to 0$.
Therefore $PT$ is indeed again a distribution.

The map $P:\mathcal{D}'(M,E,W)\rightarrow \mathcal{D}'(M,F,W)$
is $\K$-linear.
If $P$ is of order $k$ and $\varphi$ is a $C^k$-section in $E$, seen as a 
$\K$-valued distribution in $E$, then the distribution $P\varphi$ coincides 
with the continuous section obtained by applying $P$ to $\varphi$ classically.

An important special case occurs when $P$ is of order $0$, i.~e.\ 
$P\in C^\infty(M,\Hom(E,F))$.
Then $P^*\in C^\infty(M,\Hom(F^*,E^*))$ is the pointwise adjoint.
In particular, for a function $f\in C^\infty(M,\K)$ we have 
$$
(fT)[\varphi] = T[f\varphi].
$$

\subsection{Supports}

\Definition{
The {\em support} \index{distributional section!support of}of a distribution $T\in\DD'(M,E,W)$ is defined as the set
\begin{eqnarray*}
\lefteqn{\supp(T)} \\
&:=& 
\{x\in M\,|\, \forall\mbox{ neighborhood $U$ of $x$ }\exists \,
\varphi\in \DD(M,E)\mbox{ with }\supp(\varphi)\subset U
\mbox{ and } T[\varphi]\not= 0\}.
\end{eqnarray*}
}
It follows from the definition that the support of $T$ is a closed subset of
$M$.
In case $T$ is a $L^1_{\mathrm{loc}}$-section this notion of support coincides
with the usual one for sections.

If for $\phi\in\DD(M,E^*)$ the supports of $\phi$ and $T$ are disjoint, then
$T[\phi]=0$.
Namely, for each $x\in\supp(\phi)$ there is a neighborhood $U$ of $x$ such
that $T[\psi]=0$ whenever $\supp(\psi)\subset U$.
Cover the compact set $\supp(\phi)$ by finitely many such open sets
$U_1,\ldots,U_k$. 
Using a partition of unity one can write $\phi=\psi_1 + \cdots + \psi_k$ with
$\psi_j\in\DD(M,E^*)$ and $\supp(\psi_j)\subset U_j$.
Hence 
$$
T[\phi]=T[\psi_1 + \cdots + \psi_k] = T[\psi_1] + \cdots + T[\psi_k]=0.
$$
Be aware that it is not sufficient to assume that $\phi$ vanishes on
$\supp(T)$ in order to ensure $T[\phi]=0$.
For example, if $M=\R$ and $E$ is the trivial $\K$-line bundle let
$T\in\DD'(\R,\K)$ be given by $T[\phi]=\phi'(0)$.
Then $\supp(T)=\{0\}$ but $T[\phi]=\phi'(0)$ may well be nonzero while
$\phi(0)=0$.

If $T\in\DD'(M,E,W)$ and $\varphi\in C^\infty(M,E^*)$, then the evaluation
$T[\varphi]$ can be defined if $\supp(T)\cap\supp(\varphi)$ is compact
even if the support of $\phi$ itself is noncompact.
To do this pick a function $\sigma\in \DD(M,\R)$ that is constant $1$ on
a neighborhood of $\supp(T)\cap\supp(\varphi)$ and put
$$
T[\varphi] := T[\sigma\varphi].
$$
This definition is independent of the choice of $\sigma$ since for another
choice $\sigma'$ we have 
$$
T[\sigma\varphi] - T[\sigma'\varphi] =
T[(\sigma-\sigma')\varphi] = 0
$$
because $\supp((\sigma-\sigma')\varphi)$ and $\supp(T)$ are disjoint.

Let $T\in\DD'(M,E,W)$ and let $\Omega\subset M$ be an open subset.
Each test section $\varphi\in\DD(\Omega,E^*)$ can be extended by $0$ and yields
a test section $\varphi \in \DD(M,E^*)$.
This defines an embedding $\DD(\Omega,E^*)\subset\DD(M,E^*)$.
By the restriction of $T$ to $\Omega$ we mean its restriction from 
$\DD(M,E^*)$ to $\DD(\Omega,E^*)$.

\Definition{
The {\em singular support}\index{distributional section!singular support of} $\ssupp(T)$ of a distribution $T\in\DD'(M,E,W)$ 
is the set of points which do not have a neighborhood restricted to which
$T$ coincides with a smooth section.
}

The singular support is also closed and we always have $\ssupp(T)
\subset\supp(T)$.

\Example{
For the delta-distribution $\delta_x$ we have
$\supp(\delta_x)=\ssupp(\delta_x)=\{x\}$.
}

\subsection{Convergence of distributions}

The space $\DD'(M,E)$ of distributions in $E$ will always be given the
{\em weak topology}.\index{weak topology}
This means that $T_n \to T$ in $\DD'(M,E,W)$ if and only if $T_n[\varphi]
\to T[\varphi]$ for all $\varphi\in\DD(M,E^*)$.
Linear differential operators $P$ are always continuous with respect to the
weak topology. 
Namely, if $T_n \to T$, then we have for every $\varphi\in\DD(M,E^*)$
$$
PT_n[\varphi] = T_n[P^*\varphi] \to T[P^*\varphi] = PT[\varphi].
$$
Hence
$$
PT_n \to PT.
$$

\Remark{
Let $T_n,T\in C^0(M,E)$ and suppose $\|T_n - T\|_{C^0(M)} \to 0$.
Consider $T_n$ and $T$ as distributions.
Then $T_n \to T$ in $\DD'(M,E)$.
In particular, for every linear differential operator $P$
we have $PT_n \to PT$.
}

\section{Globally hyperbolic Lorentzian manifolds}

Next we summarize some notions and facts from Lorentzian geometry.
More comprehensive introductions can be found in \cite{BEE} and in
\cite{ONeill}. 

By a {\em Lorentzian manifold}\index{Lorentzian manifold}  we mean a semi-Riemannian manifold whose
metric has signature $(-,+,\cdots,+)$.
We denote the Lorentzian metric \index{Lorentzian metric}by $g$ or by $\langle\cdot,\cdot\rangle$.
A tangent vector $X\in TM$ is called {\em timelike} \index{tangent vector!timelike} if $\langle X,X\rangle <
0$, {\em lightlike}\index{tangent vector!lightlike} if $\langle X,X\rangle = 0$ and $X\not=0$, {\em causal} \index{tangent vector!causal}if
it is timelike or lightlike, and {\em spacelike} \index{tangent vector!spacelike}otherwise.
At each point $p\in M$ the set of timelike vectors in $T_pM$ decomposes into
two connected components.
A {\em timeorientation} \index{timeorientation}on $M$ is a choice of one of the two
connected components of timelike vectors in $T_pM$ which depends continuously
on $p$.
This means that we can find a continuous timelike vector field on $M$ taking
values in the chosen connected components.
Tangent vectors in the chosen connected component are called {\em future
  directed},\index{tangent vector!future directed} those in the other component are called {\em past directed}.\index{tangent vector!past directed}

Let $M$ be a timeoriented Lorentzian manifold.
A piecewise $C^1$-curve in $M$ is called {\em timelike, lightlike, causal, 
spacelike, future directed}, \index{curve!timelike}\index{curve!lightlike}\index{curve!causal}\index{curve!spacelike}\index{curve!future directed}\index{curve!past directed}or {\em
past directed} if its tangent vectors are timelike, lightlike, causal,
spacelike, future directed, or past directed respectively.

The {\em chronological future}\index{chronological future} $I_+^M(x)$ of a point $x\in M$ is the set of
points that can be reached from $x$ by future directed timelike curves. 
Similarly, the {\em causal future}\index{causal future} $J_+^M(x)$ of a point $x\in M$ consists 
of those points that can be reached from $x$ by causal curves and of $x$
itself. 
The {\em chronological future} of a subset
$A\subset M$ is defined to be $I_+^M(A):=\buil{\cup}{x\in A}I_+^M(x)$. 
Similarly, the {\em causal future} of $A$ is $J_+^M(A):=\buil{\cup}{x\in
  A}J_+^M(x)$.
The {\em chronological past}\index{chronological past} $I_-^M(A)$ and the {\em causal past}\index{causal past} $J_-^M(A)$
are defined by replacing future directed curves by past directed curves.
One has in general that $I_\pm^M(A)$ is the interior of $J^M_\pm(A)$ and
that $J_\pm^M(A)$ is contained in the closure of $I_\pm^M(A)$.
The chronological future and past are open subsets but the causal
future and past are not always closed even if $A$ is closed.

\begin{center}

\begin{pspicture*}(-5,-3)(5,3)


\psset{unit=0.7cm}

\psccurve(-2,0)(0,-0.67)(2,0)(1,0.8)(-0.5,0.67)
\rput(0,0){$A$}

\psline[linecolor=blue](1.92,-0.2)(5,3)
\psline[linecolor=blue](-1.92,-0.25)(-5,3)
\rput(0,3.5){\blue \small $J_+^M(A)$}
\psline[linewidth=0.5pt]{->}(0,3.2)(0,2.5)
\psecurve[linewidth=0.5pt]{->}(0,3.5)(0.7,3.5)(4.5,2.6)(6,1)
\psecurve[linewidth=0.5pt]{->}(0,3.5)(-0.7,3.5)(-4.5,2.6)(-6,1)
\rput(-1,1.5){\blue $I_+^M(A)$}

\psline[linecolor=red](1.9,0.32)(5,-3)
\psline[linecolor=red](-1.95,0.2)(-3.5,-1.4)
\psline[linecolor=red,linestyle=dashed](-3.5,-1.4)(-5,-3)
\psdots[dotstyle=o](-3.5,-1.4)
\rput(0,-3.5){\red \small $J_-^M(A)$}
\psline[linewidth=0.5pt]{->}(0,-3.2)(0,-2.5)
\psecurve[linewidth=0.5pt]{->}(0,-3.5)(0.7,-3.5)(4.5,-2.6)(6,-1)
\psecurve[linewidth=0.5pt]{->}(0,-3.5)(-0.7,-3.5)(-3,-1)(-6,-1)
\rput(1,-1.5){\red $I_-^M(A)$}

\psclip{\pspolygon[linestyle=none](1.9,0.32)(-1.95,0.2)(-1.95,2)(1.9,2)}
\psccurve[linecolor=red](-2,0)(0,-0.67)(2,0)(1,0.8)(-0.5,0.67)
\endpsclip

\psclip{\pspolygon[linestyle=none](1.92,-0.2)(-1.92,-0.25)(-1.92,-2)(1.92,-2)}
\psccurve[linecolor=blue](-2,0)(0,-0.67)(2,0)(1,0.8)(-0.5,0.67)
\endpsclip

\end{pspicture*} 

\abb{Causal and chronological future resp.\ past of $A$}

\end{center}

We will also use the notation $J^M(A) := J_-^M(A) \cup J_+^M(A)$.
A subset $A\subset M$ is called {\em past compact}\index{past compact subset}  if $A\cap J_-^M(p)$
is compact for all $p\in M$.
Similarly, one defines {\em future compact} subsets.\index{future compact subset}

\begin{center}

\begin{pspicture*}(-5,-1.8)(4,3)

\psset{unit=0.6cm}

\psclip{\pspolygon[linestyle=none](-5,-2)(-2.5,0.5)(0,-2)}
\pscustom[fillstyle=vlines,hatchwidth=0.6pt,hatchangle=0]{
\psline(-5,-2)(-2.5,0.5)
\psecurve[linecolor=red](-5,10)(-4,4)(-4.5,1)(1,-1)(2,3)(4,4)(5,10)
\psline(-2.5,0.5)(0,-2)}
\endpsclip

\psecurve[linecolor=red](-5,10)(-4,4)(-4.5,1)(1,-1)(2,3)(4,4)(5,10)
\rput(-1,3){\red $A$}

\psdots(-2.5,0.5)
\rput(-2.6,0.8){$p$}

\psline[linecolor=blue](-2.5,0.5)(0,-2)
\psline[linecolor=blue](-2.5,0.5)(-5,-2)
\rput(-2.5,-1.75){\blue $J_-^M(p)$}

\end{pspicture*} 

\abb{Past compact subset}

\end{center}

\Definition{
A subset $S$ of a connected timeoriented Lorentzian manifold is called 
{\em achronal}\index{achronal subset} if  each timelike 
curve meets $S$ in at most one point.
A subset $S$ of a connected timeoriented Lorentzian manifold is called 
\emph{acausal}\index{acausal subset} if  each causal
curve meets $S$ in at most one point.
A subset $S$ of a connected timeoriented Lorentzian manifold is a {\em
Cauchy hypersurface} \index{Cauchy hypersurface} if each inextendible timelike curve in $M$ 
meets $S$ at exactly one point.   
}

\begin{center}

\begin{pspicture*}(0,-3.2)(7,3.2)

\psset{unit=0.6cm}

\psecurve(0,-7)(0.5,-5)(0.5,5)(1,7)

\psecurve(10,-8)(9.5,-5)(9.5,5)(10,8)
\rput(10,2.5){$M$}

\psecurve[linecolor=red](-2,-2)(0.6,-2)(3,-1)(5,-1)
\psecurve[linecolor=red](2,0)(3,-1)(5,-2)(7,-3)
\psecurve[linecolor=red](4,-3)(5,-2)(9.29,-1.5)(11,-2)
\rput(8,-1.8){\red $S$}

\psecurve(0,-7)(0.5,-5)(0.5,5)(1,7)

\psecurve(10,-8)(9.5,-5)(9.5,5)(10,8)

\psecurve[linecolor=blue](3,5)(2,3)(2,-5)(1,-7)
\psdots(2.05,-1.37)

\end{pspicture*} 

\abb{Cauchy hypersurface}

\end{center}

Obviously every acausal subset is achronal, but the reverse is wrong. 
Any Cauchy hypersurface is achronal.
Moreover, it is a closed topological hypersurface and it is hit
by each inextendible causal curve in at least one point.
Any two Cauchy hypersurfaces in $M$ are homeomorphic.
Furthermore, the causal future and past of a Cauchy hypersurface is past 
and future compact respectively.

\Definition{
A Lorentzian manifold is said to satisfy the \emph{causality condition} \index{causality condition}if
it does not contain any closed causal curve.  

A Lorentzian manifold is said to satisfy the {\em strong causality condition}
\index{strong causality condition}if there are no almost closed causal curves.
More precisely, for each point $p\in M$ and for each open neighborhood 
$U$ of $p$ there exists an open neighborhood $V\subset U$ of $p$ such
that each causal curve in $M$ starting and ending in $V$ is entirely
contained in $U$.
}

\begin{center}

\begin{pspicture*}(-1,0)(5,2.5)
        \psdots(3.3,0.8)
        \psccurve(2.5,1.5)(3.9,1.4)(4.4,0.4)(2.4,0.5)
        \pscircle[linecolor=blue,linewidth=0.6pt](3.3,0.8){0.6}
        \psdots[linecolor=red,dotscale=0.75](3.1,0.5)(2.9,0.95)
        \pscurve[linecolor=red,linewidth=0.6pt,border=0.4pt]%
          (3.1,0.5)(1.2,0.2)(0.2,1.4)(2.9,0.95)
        \rput[ l](3.4,0.8){\small$p$}
        \rput[b ](2.8,1.2){\small\blue $V$}
        \rput[bl](4.0,1.4){\small $U$}
        \rput[bl](-1,0.8){\scriptsize\red forbidden!}
\end{pspicture*}

\abb{Strong causality condition}
\end{center}

Obviously, the strong causality condition implies the causality condition.
 
In order to get a good analytical theory for wave operators we must impose
certain geometric conditions on the Lorentzian manifold.
Here are several equivalent formulations.

\begin{thm}\label{globhyp}
Let $M$ be a connected timeoriented Lorentzian manifold.
Then the following are equivalent:
\begin{itemize}
\item[(1)]
$M$ satisfies the strong causality condition and for all
$p,q\in M$ the intersection $J_+^M(p)\cap J_-^M(q)$ is compact.
\item[(2)]
There exists a Cauchy hypersurface in $M$.
\item[(3)]
There exists a smooth spacelike Cauchy hypersurface in $M$.
\item[(4)]
$M$ is foliated by smooth spacelike Cauchy hypersurfaces.
More precisely,
$M$ is isometric to $\R\times S$ with metric $-\beta dt^2 + g_t$ where
$\beta$ is a smooth positive function, $g_t$ is a Riemannian metric
on $S$ depending smoothly on $t\in\R$ and each $\{t\}\times S$ is a smooth 
spacelike Cauchy hypersurface in $M$.
\end{itemize}
\end{thm}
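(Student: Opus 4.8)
The plan is to split the four conditions into a trivial chain and two genuinely deep implications. First I would observe that $(4)\Rightarrow(3)\Rightarrow(2)$ is immediate: in the splitting described in (4) each slice $\{t\}\times S$ is by hypothesis a smooth spacelike Cauchy hypersurface, and a smooth spacelike Cauchy hypersurface is in particular a Cauchy hypersurface. So the content lies in proving $(2)\Rightarrow(1)$, the converse $(1)\Rightarrow(2)$, and the upgrade $(2)\Rightarrow(4)$ (which subsumes $(2)\Rightarrow(3)$).

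For $(2)\Rightarrow(1)$ I would argue as follows. Assume $S\subset M$ is a Cauchy hypersurface. Strong causality must hold, for otherwise an almost-closed causal curve could be iterated and, after a slight timelike perturbation, extended to an inextendible timelike curve meeting $S$ more than once, contradicting the defining property of $S$. For compactness of the diamond $J_+^M(p)\cap J_-^M(q)$ I would use the facts recalled in the text that $J_+^M(S)$ is past compact, $J_-^M(S)$ is future compact, together with the standard consequence of the Cauchy property that $M=J_+^M(S)\cup J_-^M(S)$: a sequence of points in the diamond lies, after passing to a subsequence, on one side of $S$, hence inside a compact "slab"; a limit-curve argument for causal curves joining $p$ to $q$ then extracts a convergent subsequence with limit again in the diamond. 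The estimates are routine once the compactness of the slabs is in hand.

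The substantial direction is $(1)\Rightarrow(2)$, which is Geroch's theorem. First I would fix a smooth measure $\mu$ on $M$ with $\mu(M)<\infty$ (rescale $\dV$) and form the volume functions $t_-(p):=\mu(J_-^M(p))$, $t_+(p):=\mu(J_+^M(p))$, and set $\tau:=\ln(t_-/t_+)$. Strong causality makes $t_-$ strictly increasing and $t_+$ strictly decreasing along every future directed causal curve, while the compactness of causal diamonds is exactly what is needed to prove that $t_\pm$ are continuous (lower semicontinuity comes for free from the openness of $I_\pm^M(p)$, upper semicontinuity from the no-escape-to-infinity supplied by (1)). Using that $t_+\to 0$ towards the future end and $t_-\to 0$ towards the past end of any inextendible timelike curve, one concludes that $\tau$ is continuous and increases strictly from $-\infty$ to $+\infty$ along each such curve, so every level set $\tau^{-1}(c)$ is an achronal Cauchy hypersurface. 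The main obstacle here is precisely the continuity of the volume functions: this is where hypothesis (1) is genuinely used, since mere causality would allow the causal cones to degenerate and the volumes to jump.

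Finally, for $(2)\Rightarrow(4)$ I would invoke the smoothing results of Bernal and S\'anchez, as is done in \cite{BGP}. Starting from Geroch's continuous Cauchy time function one produces a smooth temporal function $\mathcal{T}:M\to\R$, i.e.\ one with past directed timelike gradient everywhere, whose level sets are smooth spacelike Cauchy hypersurfaces; more generally any prescribed spacelike Cauchy hypersurface can be realised as such a level set. The isometry $M\cong\R\times S$ with metric $-\beta\,dt^2+g_t$ then arises by flowing along $-\grad\mathcal{T}/|\grad\mathcal{T}|^2$, taking $\mathcal{T}$ as the $\R$-coordinate, $S:=\mathcal{T}^{-1}(0)$, and $g_t$ the induced metric on $\mathcal{T}^{-1}(t)$; completeness of this flow and the fact that it exhausts $M$ are equivalent to $\mathcal{T}$ being Cauchy. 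I expect the construction of the smooth temporal function — the delicate global mollification that keeps the gradient timelike — to be by far the hardest single ingredient, and in a summary of this kind I would cite it rather than reproduce it.
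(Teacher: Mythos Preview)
Your proposal is correct and aligns with the paper's treatment: the paper does not prove this theorem either but handles it by citation, noting that $(4)\Rightarrow(3)\Rightarrow(2)$ are trivial, that $(2)\Rightarrow(1)$ is classical (referring to O'Neill), and that $(1)\Rightarrow(4)$ is Bernal--S\'anchez building on Geroch---exactly the ingredients you invoke, only you unpack Geroch's volume-function construction and the Bernal--S\'anchez smoothing where the paper merely cites. The sole organizational difference is that the paper closes the cycle via $(1)\Rightarrow(4)$ directly, whereas you factor it as $(1)\Rightarrow(2)$ (Geroch) followed by $(2)\Rightarrow(4)$ (Bernal--S\'anchez); this is the same content regrouped.
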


That (1) implies (4) has been shown by Bernal and S\'anchez in 
\cite[Thm.~1.1]{BS} using work of Geroch \cite[Thm.~11]{Geroch}.
See also \cite[Prop.~6.6.8]{EH} and \cite[p.~209]{Wa} for earlier mentionings
of this fact.
The implications $(4) \Rightarrow (3)$ and $(3) \Rightarrow (2)$ are trivial.
That (2) implies (1) is well-known, see e.~g.\ \cite[Cor.~39, p.~422]{ONeill}.

\Definition{
A connected timeoriented Lorentzian manifold satisfying one and hence all
conditions in Theorem~\ref{globhyp} is called {\em globally
hyperbolic}.\index{Lorentzian manifold!globally hyperbolic}
}

\Remark{\label{rem:globhypOmega}
If $M$ is a globally hyperbolic Lorentzian manifold, then a nonempty open
subset $\Omega\subset M$ is itself globally hyperbolic if and only if for any
$p,q\in \Omega$ the intersection $J_+^\Omega(p)\cap
J_-^\Omega(q)\subset\Omega$ is compact.
Indeed non-existence of almost closed causal curves in $M$ directly implies
non-existence of such curves in $\Omega$.
}

\Remark{\label{rem:globhypconf}
It should be noted that global hyperbolicity is a conformal notion.
The definition of a Cauchy hypersurface requires only causal concepts.
Hence if $(M,g)$ is globally hyperbolic and we replace the metric $g$ by a
conformally related metric $\hat g = f\cdot g$, $f$ a smooth positive function
on $M$, then $(M,\hat g)$ is again globally hyperbolic.
}

\Examples{\label{ex:globhyp}
Minkowski space \index{Minkowski space} is globally hyperbolic.
Every spacelike hyperplane is a Cauchy hypersurface.
One can write Minkowski space as $\R\times\R^{n-1}$ with the metric 
$-\dt^2+g_t$ where $g_t$ is the Euclidean metric on $\R^{n-1}$ and does not
depend on $t$.

Let $(S,g_0)$ be a connected Riemannian manifold and $I\subset\R$ an interval.
The manifold $M=I\times S$ with the metric $g=-\dt^2 + g_0$ is
globally hyperbolic if and only if $(S,g_0)$ is complete.
This applies in particular if $S$ is compact.

More generally, if $f:I\to\R$ is a smooth positive function we may equip
$M=I\times S$ with the metric $g=-\dt^2 + f(t)^2\cdot g_0$.
Again, $(M,g)$ is globally hyperbolic if and only if $(S,g_0)$ is complete.
{\em Robertson-Walker spacetimes} \index{Robertson-Walker spacetime}and, in particular, {\em Friedmann
  cosmological models},\index{Friedmann cosmological model}  are of this type. 
They are used to discuss big bang, \index{big bang}expansion of the universe, \index{expansion of the universe}and cosmological
redshift, \index{cosmological
redshift}compare \cite[Ch.~12]{ONeill}.
Another example of this type is {\em deSitter spacetime}, \index{deSitter spacetime}where $I=\R$,
$S=S^{n-1}$, $g_0$ is the canonical metric of $S^{n-1}$ of constant sectional
curvature $1$, and $f(t)=\cosh(t)$.
But {\em Anti-deSitter spacetime} \index{anti-deSitter spacetime}is not globally hyperbolic.

The interior and exterior {\em Schwarzschild spacetimes} \index{Schwarzschild spacetime}are globally
hyperbolic.
They model the universe in the neighborhood of a massive static rotionally
symmetric body such as a black hole.\index{black hole}
They are used to investigate perihelion advance of Mercury, the bending of
light near the sun and other astronomical phenomena, see
\cite[Ch.~13]{ONeill}.
}

\begin{lemma}\label{lJ+KJ-K'cpct}
Let $S$ be a Cauchy hypersurface in a globally hyperbolic Lorentzian manifold
$M$ and let $K,K'\subset M$ be compact.

Then $J_\pm^M(K)\cap S$, $J_\pm^M(K)\cap J_\mp^M(S)$, and $J_+^M(K)\cap
J_-^M(K')$ are compact.
\end{lemma}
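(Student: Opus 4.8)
The strategy is to reduce every assertion to the single known fact (Theorem~\ref{globhyp}(1)) that $J_+^M(p)\cap J_-^M(q)$ is compact for all $p,q\in M$, together with the structural facts recorded after the definition of Cauchy hypersurfaces: that $S$ is a closed topological hypersurface and that $J_\pm^M(S)$ is past/future compact. First I would observe that all of $K,K',S,J_\pm^M(S)$ are \emph{closed} subsets of $M$, so that each of the sets in question is closed, and hence it suffices to show that each is contained in some compact set. A compact set is covered by finitely many of the coordinate-type neighborhoods furnished by the strong causality condition, so the plan is to cover $K$ (and $K'$) by finitely many points whose causal futures/pasts control everything.

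The core estimate I would isolate first is the compactness of $J_+^M(K)\cap J_-^M(K')$. Writing $K\subset\bigcup_{i=1}^m V_i$ and $K'\subset\bigcup_{j=1}^n W_j$ for small causally convex neighborhoods with $V_i\ni p_i$, $W_j\ni q_j$, one checks $J_+^M(K)\subset\bigcup_i J_+^M(p_i)$ only after slightly enlarging: more cleanly, choose points $p_i\in I_-^M(K)$ (respectively $q_j\in I_+^M(K')$) near $K$ using that chronological pasts/futures are open and $K$ compact, so that $K\subset I_+^M(\{p_i\})\subset J_+^M(\{p_i\})$ and similarly $K'\subset J_-^M(\{q_j\})$. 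Then
\[
J_+^M(K)\cap J_-^M(K')\;\subset\;\bigcup_{i,j}\bigl(J_+^M(p_i)\cap J_-^M(q_j)\bigr),
\]
a finite union of compact sets by Theorem~\ref{globhyp}(1), hence relatively compact; being closed, it is compact. The same device handles $J_\pm^M(K)\cap J_\mp^M(S)$: cover $K$ as above, so $J_+^M(K)\cap J_-^M(S)\subset\bigcup_i\bigl(J_+^M(p_i)\cap J_-^M(S)\bigr)$; each summand is the intersection of the past-compact set $J_-^M(S)$ with $J_+^M(p_i)$, and past compactness is precisely the statement that $J_-^M(S)\cap J_+^M(p_i)$ — being contained in $J_-^M(q)$ for any $q\in J_+^M(p_i)$, or directly from past compactness of $J_-^M(S)$ applied with the point $p_i$ — is compact. (One must phrase this carefully: past compactness of $A$ means $A\cap J_-^M(p)$ compact; here I use that $J_+^M(p_i)\cap J_-^M(S)$ is closed and contained in $J_-^M(S)$ which is future compact and in $J_+^M(p_i)$, intersecting it against a single $J_-^M(q)$ with $q\in J_+^M(p_i)$.)

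Finally $J_\pm^M(K)\cap S$ follows as a special case: $S\subset J_-^M(S)$ and $S\subset J_+^M(S)$, so $J_+^M(K)\cap S\subset J_+^M(K)\cap J_-^M(S)$, which we have just shown compact; and $S$ is closed, so the intersection is closed, hence compact. The symmetric statements with $-$ in place of $+$ are obtained by reversing the time orientation, under which all hypotheses are preserved.

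\textbf{Main obstacle.} The delicate point is the second family $J_\pm^M(K)\cap J_\mp^M(S)$: one cannot simply write $S$ as a finite union of points, since $S$ is noncompact, so the reduction must run through the past/future compactness of $J_\mp^M(S)$ rather than through Theorem~\ref{globhyp}(1) directly. The argument hinges on correctly combining compactness of $K$ (to get finitely many controlling points $p_i$) with past compactness of $J_-^M(S)$ (to control the noncompact direction), and on the elementary but essential observation that all the sets involved are closed so that "relatively compact" upgrades to "compact". I expect the bookkeeping around which points to choose (in $I_-^M(K)$ versus on $K$ itself) and the precise invocation of past compactness to be where care is needed; everything else is a routine covering argument.
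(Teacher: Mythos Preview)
The paper states this lemma without proof, so there is nothing to compare against; your outline is the standard argument and is correct in outline. Two points need tightening before it is a proof.

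First, your opening claim that closedness of $K,K',S,J_\pm^M(S)$ alone makes ``each of the sets in question closed'' is not sufficient: the sets involve $J_\pm^M(K)$ and $J_-^M(K')$, and you never argue that these are closed. In a general timeoriented Lorentzian manifold they need not be---the paper itself warns that $J_\pm^M(A)$ can fail to be closed even for closed $A$. In a globally hyperbolic manifold the causal relation $\{(p,q):q\in J_+^M(p)\}\subset M\times M$ is closed, and from this together with compactness of $K$ a short sequence argument gives closedness of $J_\pm^M(K)$; you should invoke this explicitly, since without it ``contained in a compact set'' does not upgrade to ``compact''.

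Second, the paragraph on $J_+^M(K)\cap J_-^M(S)$ tangles the terminology. What the paper records is that $J_-^M(S)$ is \emph{future} compact, i.e.\ $J_-^M(S)\cap J_+^M(p)$ is compact for every $p\in M$; that is exactly what you need for each $p_i$, and nothing more. Your parenthetical eventually lands on this, but the surrounding sentences about ``past compactness of $J_-^M(S)$'' and ``intersecting against a single $J_-^M(q)$'' are either misstated or superfluous and should be cut. With these two fixes the argument goes through.
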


\section{Wave operators}

Let ${M}$ be a Lorentzian manifold and let $E\to {M}$ be a real or complex 
vector bundle.
A linear differential operator $P:\,C^{\infty}({M},E)\to 
C^{\infty}({M},E)$ of second order will be called a {\it wave operator} \index{wave operator}or a
{\it normally hyperbolic operator} \index{normally hyperbolic operator} if its principal symbol is given by the
metric, 
$$
\sigma_P(\xi) = -\la\xi,\xi\ra\cdot\id_{E_x}
$$
for all $x\in{M}$ and all $\xi \in T^*_x{M}$.
In other words, if we choose local coordinates $x^1,\ldots,x^n$ on $M$ and a
local trivialization of $E$, then 
$$
P = -\sum_{i,j=1}^n g^{ij}(x)\frac{\partial^2}{\partial x^i \partial x^j}
+ \sum_{j=1}^n A_j(x) \frac{\partial}{\partial x^j} + B(x)
$$
where $A_j$ and $B$ are matrix-valued coefficients depending smoothly on
$x$ and $(g^{ij})_{ij}$ is the inverse matrix of  $(g_{ij})_{ij}$ with
$g_{ij}=\la\frac{\partial}{\partial x^i},\frac{\partial}{\partial x^j}\ra$.

\Example{
Let $E$ be the trivial line bundle so that sections in $E$ are just functions.
The d'Alembert operator \index{d'Alembert operator} $P=\Box=-\div\circ\grad$ is a wave operator.
}

\Example{
Let $E$ be a vector bundle and let $\nabla$ be a connection on $E$.
This connection together with the Levi-Civita connection on $T^*M$ induces a
connection on $T^*M\otimes E$, again denoted $\nabla$.
We define the {\em connection-d'Alembert operator} \index{connection-d'Alembert operator} $\Box^\nabla$ to be minus
the composition of the following three maps
$$
C^\infty(M,E) \stackrel{\nabla}{\longrightarrow}
C^\infty(M,T^*M\otimes E) \stackrel{\nabla}{\longrightarrow}
C^\infty(M,T^*M\otimes T^*M\otimes E)
\xrightarrow{\tr\otimes\id_E} 
C^\infty(M,E)
$$
where $\tr:T^*M\otimes T^*M \to \R$ denotes the metric trace,
$\tr(\xi\otimes\eta)=\la\xi,\eta\ra$. 
We compute the principal symbol,
$$
\sigma_{\Box^\nabla}(\xi)\phi
=
-(\tr\otimes\id_E)\circ\sigma_{\nabla}(\xi)\circ\sigma_{\nabla}(\xi)(\phi)
=
-(\tr\otimes\id_E)(\xi\otimes\xi\otimes\phi)
=
-\la\xi,\xi\ra\,\phi.
$$
Hence $\Box^\nabla$ is a wave operator.
}
\Example{
Let $E=\Lambda^kT^*M$ be the bundle of $k$-forms.
Exterior differentiation $d:C^\infty(M,\Lambda^kT^*M) \to
C^\infty(M,\Lambda^{k+1}T^*M)$ increases the degree by one while
the codifferential $\delta:C^\infty(M,\Lambda^{k}T^*M) \to
C^\infty(M,\Lambda^{k-1}T^*M)$ decreases the degree by one.
While $d$ is independent of the metric, the codifferential $\delta$ does
depend on the Lorentzian metric.
The operator $P=d\delta + \delta d$ is a wave operator.
}
\Example{
If $M$ carries a Lorentzian metric and a spin structure, then one can define
the spinor bundle $\Sigma M$ and the Dirac operator\index{Dirac operator}
$$
D : C^\infty(M,\Sigma M) \to C^\infty(M,\Sigma M) ,
$$
see \cite{Baum} or \cite{BGM} for the definitions.
The principal symbol of $D$ is given by Clifford multiplication,
$$
\sigma_D(\xi)\psi = \xi^\sharp \cdot \psi.
$$
Hence
$$
\sigma_{D^2}(\xi)\psi = \sigma_D(\xi)\sigma_D(\xi)\psi = \xi^\sharp \cdot
\xi^\sharp \cdot \psi = -\la\xi,\xi\ra\,\psi.
$$
Thus $P=D^2$ is a wave operator.
}

\section{The Cauchy problem}

We now come to the basic initial value problem \index{initial value problem}for wave operators, the
{\em Cauchy problem}.\index{Cauchy problem}
The local theory of linear hyperbolic operators can be found in basically any
textbook on partial differential equations.
In \cite{FL2} and \cite{Guenther} the local theory for wave operators on
Lorentzian manifolds is developed.
The results of this section are of global nature.
They make statements about solutions to the Cauchy problem which are defined
globally on a manifold.
Proofs of the results of this section can be found in \cite[Sec.~3.2]{BGP}.

\begin{thm}[Existence and uniqueness of solutions]\label{cauchyglobhyp}\index{Cauchy problem!existence and uniqueness of solutions}
Let $M$ be a globally hyperbolic Lorentzian manifold and let
 $S\subset M$ be a smooth spacelike Cauchy hypersurface.
Let $\nu$ be the future directed timelike unit normal field along $S$.
Let $E$ be a vector bundle over $M$ and let $P$ be a wave
operator acting on sections in $E$.

Then for each  $u_0, u_1 \in \DD(S,E)$ and for each
$f\in\DD(M,E)$ there exists a unique $u\in C^\infty(M,E)$ satisfying
$Pu=f$, $u|_S = u_0$, and $\nabla_\nu u|_S = u_1$.
\end{thm}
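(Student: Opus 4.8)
The plan is to reduce the global statement to a patchwork of local solvability results, glued together using the geometry of globally hyperbolic manifolds. First I would recall the purely local theory: on a small globally hyperbolic neighborhood (for instance a causal domain diffeomorphic to a product with a piece of Cauchy surface), the Cauchy problem $Pu=f$, $u|_S=u_0$, $\nabla_\nu u|_S=u_1$ has a unique smooth solution, with finite propagation speed, i.e.\ $\supp(u)\subset J^M(\supp(u_0)\cup\supp(u_1)\cup\supp(f))$. This is the content of the classical theory in \cite{FL2,Guenther} and is the black box on which everything rests. Using Theorem~\ref{globhyp}, write $M\cong\R\times S$ with metric $-\beta\,dt^2+g_t$ and $S=\{0\}\times S$; the coordinate $t$ gives a global time function whose level sets are smooth spacelike Cauchy hypersurfaces.

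\textbf{Uniqueness.} I would prove uniqueness first, since it is both easier and needed for the gluing in the existence part. Suppose $Pu=0$ with $u|_S=0$ and $\nabla_\nu u|_S=0$. Fix $p\in M$, say $p$ in the future of $S$; one shows $u$ vanishes on $J_-^M(p)\cap J_+^M(S)$, which by Lemma~\ref{lJ+KJ-K'cpct} is compact. The standard tool is an energy estimate: contract a suitable energy-momentum expression built from $u$ and $\nabla u$ (using the auxiliary metrics on $E$) with the timelike vector field $\partial_t$, and integrate over the truncated region $\{0\le t\le s\}\cap J_-^M(p)$. The divergence theorem plus the wave equation gives a Gr\"onwall-type inequality for the energy $\mathcal E(s)=\int_{\{t=s\}\cap J_-^M(p)}(\dots)$, forcing $\mathcal E\equiv 0$ and hence $u\equiv 0$ near $p$. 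Since $p$ was arbitrary and an analogous argument works in the past, $u\equiv 0$ globally.

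\textbf{Existence.} Cover $M$ by relatively compact globally hyperbolic open sets $\Omega_i$ in which the local Cauchy problem is solvable, each meeting $S$ in a Cauchy hypersurface $S_i=\Omega_i\cap S$. By a partition of unity subordinate to $\{\Omega_i\}$ one splits the data $(u_0,u_1,f)$ into pieces supported in the $\Omega_i$; local solutions exist and, by uniqueness together with finite propagation speed, they agree on overlaps, so they patch to a solution on a neighborhood of $S$. The genuinely global step is to extend this to all of $M$: using the foliation $\{t\}\times S$, one solves forward in time by an exhaustion/continuation argument. The set of times $T>0$ for which a smooth solution exists on $\{0\le t<T\}$ is nonempty, open (re-solve a local Cauchy problem with data on a slice $\{t=T-\varepsilon\}$), and closed (the solution extends up to and past $t=T$ because, by finite propagation speed and Lemma~\ref{lJ+KJ-K'cpct}, on each compact time-slab the solution depends only on data in a compact set, giving uniform control — no blow-up for a linear equation), hence all of $[0,\infty)$; symmetrically for $t<0$. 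Gluing the future and past solutions along $S$ and invoking uniqueness gives the desired global $u\in C^\infty(M,E)$ with $\supp(u)\subset J^M(\supp(u_0)\cup\supp(u_1)\cup\supp(f))$.

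\textbf{Main obstacle.} The routine parts are the local existence theory and the partition-of-unity bookkeeping; the delicate point is the continuation argument — showing that a solution on $\{0\le t<T\}$ actually extends smoothly across $t=T$. This is where global hyperbolicity is essential: one needs that the domain of dependence of a compact slab is compact (Lemma~\ref{lJ+KJ-K'cpct}) so that finitely many local solutions, matched via uniqueness, cover a full neighborhood of $\{t=T\}$ with uniform estimates, ruling out the accumulation of singularities as $t\uparrow T$. Without the compactness of $J_+^M(p)\cap J_-^M(q)$ this closedness step fails, which is exactly why the hypothesis cannot be dropped.
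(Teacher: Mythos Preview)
The paper does not actually prove Theorem~\ref{cauchyglobhyp}; it only states the result and refers the reader to \cite[Sec.~3.2]{BGP} for the proof. So there is no ``paper's own proof'' to compare against directly. That said, your outline is essentially the strategy carried out in \cite{BGP}: uniqueness via an energy estimate and a Gr\"onwall argument on compact lens-shaped regions $J_-^M(p)\cap J_+^M(S)$, and existence by first solving locally (using the local theory built on Riesz distributions and formal Hadamard series in \cite{BGP}, or equivalently the local results in \cite{FL2,Guenther}) and then propagating along the foliation $\{t\}\times S$ supplied by Theorem~\ref{globhyp}.

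One point deserves sharpening. Your open--closed continuation argument is the right shape, but the closedness step is where the actual analytic work hides, and your sketch glosses over it. Saying ``no blow-up for a linear equation'' is not by itself an argument: you need uniform $C^k$-bounds on the solution as $t\uparrow T$, and these come from the quantitative energy estimates (the same ones used for uniqueness), iterated to control higher derivatives, together with the compactness from Lemma~\ref{lJ+KJ-K'cpct}. In \cite{BGP} this is made precise by proving that on any relatively compact causal domain the local solution operator is continuous $\DD\to C^\infty$ with explicit constants, which then feed into the propagation. Your identification of this as the main obstacle is correct; just be aware that turning the heuristic into a proof requires the full energy-estimate machinery, not merely linearity.
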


It is unclear how to even formulate the Cauchy problem on a Lorentzian
manifold which is not globally hyperbolic.
One would have to replace the concept of a Cauchy hypersurface by something
different to impose the initial conditions upon.
Here are two examples which illustrate what can typically go wrong.

\Example{
Let $M=S^1 \times \R^{n-1}$ with the metric $g=-d\theta^2 + g_0$ where
$d\theta^2$ is the standard metric on $S^1$ of length $1$ and $g_0$ is the
Euclidean metric on $\R^{n-1}$.
The universal covering of $M$ is Minkowski space.

Let us try to impose a Cauchy problem on $\{\theta_0\} \times \R^{n-1}$ which
is the image of a Cauchy hypersurface in Minkowski space.
Such a solution would lift to Minkowski space where it indeed exists uniquely
due to Theorem~\ref{cauchyglobhyp}.
But such a solution on Minkowski space is in general not time periodic, hence
does not descend to a solution on $M$.

Therefore existence of solutions fails.
The problem is here that $M$ violates the causality condition, i.~e.\ there
are closed causal curves.
}

\Remark{
Compact Lorentzian manifolds always possess closed timelike curves and are
therefore never well suited for the analysis of wave operators.
}

\Example{
Let $M$ be a timelike strip in $2$-dimensional Minkowski space, i.~e.\
$M=\R\times (0,1)$ with metric $g=-dt^2 + dx^2$.
Let $S:= \{0\}\times(0,1)$.
Given any $u_0, u_1 \in \DD(S,E)$ and any $f\in\DD(M,E)$, there exists a
solution $u$ to the Cauchy problem.
One can simply take the solution in Minkowski space and restrict it to $M$.
But this solution is not unique in $M$.
Choose $x$ in Minkowski space, $x\not\in M$, such that
$J_+^{\mathrm{Mink}}(x)$ intersects $M$ in the future of $S$ and of $\supp(f)$.
The advanced fundamental solution $w=F_+(x)$ (see next section) has support
contained in $J_+^{\mathrm{Mink}}(x)$ and satisfies $Pw=0$ away from $x$.
Hence $u+w$ restricted to $M$ is again a solution to the Cauchy problem on $M$
with the same initial data.

\begin{center}

\begin{pspicture*}(-5.5,-1.8)(6,3.5)

\psset{unit=0.8cm}

\psline[linecolor=blue,linestyle=dashed](0,-2)(0,4)
\psline[linecolor=blue,linestyle=dashed](2,-2)(2,4)
\rput(1,-1.5){\blue $M$}

\psline[linewidth=1.5pt](0,0)(2,0)
\rput(2.3,0){$S$}

\psellipse[linecolor=red](1.3,1)(0.5,0.6)
\rput(3.5,1){\red $\supp(f)$}
\psecurve[linecolor=red]{->}(3.5,0.5)(2.5,1)(1.3,1)(0.3,0.5)

\qdisk(-1,1){1.5pt}
\rput(-1,0.7){$x$}
\rput(-1.5,3){$J^{\mathrm{Mink}}_+(x)$}
\psline[linestyle=dashed](-1,1)(2,4)
\psline[linestyle=dashed](-1,1)(-4,4)

\psline[linewidth=1.5pt,linecolor=red](0.5,0)(0.8,0)
\rput(4.6,-1){\red $\supp(u_0)\cup\supp(u_1)$}
\psline[linestyle=dotted,linecolor=red,dotsep=1pt](0.5,0)(0,0.5)
\psline[linestyle=dotted,linecolor=red,dotsep=1pt](0.8,0)(1.2,0.4)
\psline[linestyle=dotted,linecolor=red,dotsep=1pt](1.7,0.6)(2,0.95)
\psecurve[linecolor=red]{->}(3.5,-0.5)(2.5,-1)(0.65,-0.1)(0.65,0.5)

\end{pspicture*} 

\abb{Nonunique solution to Cauchy problem}

\end{center}

The problem is here that $S$ is acausal but not a Cauchy hypersurface.
Physically, a wave ``from outside the manifold'' enters into $M$.
}

The physical statement that a wave can never propagate faster than with the
speed of light is contained in the following.

\begin{thm}[Finite propagation speed]\index{finite propagation speed}
The solution $u$ from Theorem~\ref{cauchyglobhyp} satisfies $\supp(u) \subset
J^M(K)$ where $K=\supp(u_0)\cup \supp(u_1) \cup \supp(f)$.
\end{thm}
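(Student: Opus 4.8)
The plan is to reduce the global support statement to the local domain-of-dependence property for wave operators, which is part of the local theory cited from \cite{FL2,Guenther} and used in \cite[Sec.~3.2]{BGP}. The statement we wish to prove is that if $Pu = f$ with $u|_S = u_0$ and $\nabla_\nu u|_S = u_1$, then $u$ vanishes outside $J^M(K)$, where $K = \supp(u_0)\cup\supp(u_1)\cup\supp(f)$. Equivalently, writing $\Omega := M\setminus J^M(K)$, we must show $u\equiv 0$ on $\Omega$. The key point is that $\Omega$ meets $S$ precisely in $S\setminus J^M(K)$, and on that part of $S$ both Cauchy data vanish; moreover $f$ vanishes on all of $\Omega$ (indeed on all of $M\setminus J^M(K)$). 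So $u$ restricted to $\Omega$ solves the \emph{homogeneous} equation with \emph{zero} Cauchy data on the portion of $S$ inside $\Omega$, and we want to conclude $u=0$ there by a domain-of-dependence/uniqueness argument.

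First I would fix a point $p\in\Omega$, say $p\in I_+^M(S)$ (the past case being symmetric, and $p\in S$ being immediate since then $p\notin J^M(K)$ forces $u_0(p)=0$, though that alone does not give $u(p)=0$ so one still argues as below), and consider the past $J_-^M(p)$. Using Lemma~\ref{lJ+KJ-K'cpct} and global hyperbolicity, the set $J_-^M(p)\cap J_+^M(S)$ is compact, and $J_-^M(p)\cap S$ is a compact subset of $S$. I claim $J_-^M(p)\cap K = \emptyset$: if $q\in J_-^M(p)\cap K$ then $p\in J_+^M(q)\subset J_+^M(K)\subset J^M(K)$, contradicting $p\in\Omega$. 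Hence $J_-^M(p)$ is disjoint from $\supp(f)$, and $J_-^M(p)\cap S$ is disjoint from $\supp(u_0)\cup\supp(u_1)$. Now the local theory of wave operators gives uniqueness of solutions to the Cauchy problem on the globally hyperbolic ``diamond-shaped'' neighborhood $D_-(p)$ (the past Cauchy development of $J_-^M(p)\cap S$): since $u$ and the zero section are both solutions with the same (vanishing) data there, $u(p)=0$. This is precisely the content of the finite-propagation-speed statement in the local theory, which we are entitled to assume; the role of global hyperbolicity is to guarantee the relevant past set is compact so that the local result can be globalized by a standard chain/connectedness argument along $J_-^M(p)$ from $S$ up to $p$.

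Carrying out that globalization is the main technical step. One covers the compact set $J_-^M(p)\cap\overline{I_+^M(S)}$ by finitely many coordinate charts on each of which the local existence-and-uniqueness theorem for $P$ applies, and propagates the vanishing of $u$ stepwise: starting from $S$, where $u$ and $\nabla_\nu u$ vanish near $J_-^M(p)\cap S$, one pushes the zero-set forward along causal directions, using at each step that the operator has propagation speed bounded by the light cones and that $f=0$ throughout $J_-^M(p)$. Since $p$ was an arbitrary point of $\Omega$, this yields $u\equiv 0$ on $\Omega$, i.e.\ $\supp(u)\subset J^M(K)$.

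The hard part is not any single estimate but making the patching argument clean: one must ensure the finitely many local diamonds fit together compatibly and that the ``first point where $u$ could fail to vanish'' argument (a connectedness argument on $J_-^M(p)$, using that the subset of $J_-^M(p)$ on which $u$ vanishes in a neighborhood is open, and closed, and nonempty because it contains a neighborhood of $J_-^M(p)\cap S$) is valid. All of the Lorentzian-geometric input needed for this — compactness of causal diamonds, past/future compactness of $J^M_\mp(S)$, and the structure of Cauchy developments — is available from Section~3 and Lemma~\ref{lJ+KJ-K'cpct}, so the proof is essentially a bookkeeping reduction to the cited local theory in \cite{BGP}.
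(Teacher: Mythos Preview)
The paper does not actually prove this theorem: all results in Section~5 are stated without proof and referred to \cite[Sec.~3.2]{BGP}, so there is nothing to compare against directly. Your sketch is the standard domain-of-dependence argument and is correct in outline; it is essentially the argument carried out in \cite{BGP}.

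Two small comments. First, your parenthetical about the case $p\in S$ is slightly muddled: if $p\in S\setminus J^M(K)$ then $p\notin\supp(u_0)$, so $u(p)=u_0(p)=0$ is immediate and no further argument is needed for that point. Second, you implicitly use that $J^M(K)$ is closed (so that its complement is open and the pointwise vanishing of $u$ there really gives the support inclusion); this holds for compact $K$ in a globally hyperbolic manifold but is worth stating, since the paper explicitly remarks that causal futures and pasts need not be closed in general.
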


The solution to the Cauchy problem depends continuously on the data.

\begin{thm}[Stability]\label{cauchystetig}\index{Cauchy problem!stability of solutions}
Let $M$ be a globally hyperbolic Lorentzian manifold and let 
$S\subset M$ be a smooth spacelike Cauchy hypersurface.
Let $\nu$ be the future directed timelike unit normal field along $S$.
Let $E$ be a vector bundle over $M$ and let $P$ be a wave
operator acting on sections in $E$.

Then the map $\DD(M,E) \oplus \DD(S,E) \oplus \DD(S,E) \to C^\infty(M,E)$
sending $(f,u_0,u_1)$ to the unique solution $u$ of the Cauchy
problem $Pu=f$, $u|_S=u|_0$, $\nabla_\nu u = u_1$ is linear continuous.
\end{thm}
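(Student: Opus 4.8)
The plan is to deduce continuity of the solution map from the uniqueness statement (Theorem~\ref{cauchyglobhyp}) together with an abstract functional-analytic principle, rather than by tracing energy estimates by hand. First I would fix a countable exhaustion $M = \bigcup_n K_n$ by compact sets with $K_n \subset \mathrm{int}(K_{n+1})$, so that the Fréchet topology on $C^\infty(M,E)$ is generated by the seminorms $\phi \mapsto \|\phi\|_{C^k(K_n)}$ defined in~\eqref{defCkNorm}. On the source side, $\DD(M,E)\oplus\DD(S,E)\oplus\DD(S,E)$ carries its usual LF-topology; to check continuity of a linear map out of an LF-space it suffices to check sequential continuity, i.e.\ that if $(f_j,u_{0,j},u_{1,j})\to 0$ in the sense that all supports lie in a fixed compact set and all $C^k$-norms go to $0$, then the corresponding solutions $u_j$ converge to $0$ in $C^\infty(M,E)$.

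The key step is to realize the solution map as a \emph{closed} linear operator and invoke the closed graph theorem. Both $\DD(M,E)\oplus\DD(S,E)\oplus\DD(S,E)$ (as an LF-space) and $C^\infty(M,E)$ (as a Fréchet space) are webbed / barrelled in the sense required for the De Wilde closed graph theorem (or one may work with the Banach spaces $C^k(K_n,E)$ and pass to the limit). So it remains to verify: if $(f_j,u_{0,j},u_{1,j})\to(f,u_0,u_1)$ in the source and the solutions $u_j\to v$ in $C^\infty(M,E)$, then $v$ is the solution associated to $(f,u_0,u_1)$. This is where the earlier results do the work: differential operators are continuous on $C^\infty$, hence $Pu_j\to Pv$; but $Pu_j=f_j\to f$, so $Pv=f$. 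Restriction to $S$ and the normal derivative $\nabla_\nu(\cdot)|_S$ are likewise continuous from $C^\infty(M,E)$ to $C^\infty(S,E)$, so $v|_S=\lim u_j|_S=\lim u_{0,j}=u_0$ and $\nabla_\nu v|_S=\lim u_{1,j}=u_1$. By the \emph{uniqueness} half of Theorem~\ref{cauchyglobhyp}, $v$ is the unique solution, i.e.\ $v=u$. Thus the graph is closed, and the closed graph theorem yields continuity. Linearity itself is immediate from uniqueness: if $u,u'$ solve the Cauchy problems with data $(f,u_0,u_1)$ and $(f',u'_0,u'_1)$, then $au+bu'$ solves the one with data $a(f,u_0,u_1)+b(f',u'_0,u'_1)$, and uniqueness forces it to be \emph{the} solution.

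The main obstacle is purely topological bookkeeping: one must be careful that the closed graph theorem is actually available for the pair of spaces at hand, since $\DD(M,E)$ is not Fréchet (it is a strict LF-space) and the classical Banach-space closed graph theorem does not literally apply. The clean way around this, which I would adopt, is to restrict attention to one ``piece'' at a time: fix a compact $L\subset M$ and work with the Fréchet space $\DD_L(M,E)$ of sections supported in $L$ together with the analogous Fréchet spaces on $S$; then the source becomes a genuine Fréchet space, the closed graph theorem for Fréchet spaces applies verbatim, and continuity of the restricted map follows. Since the LF-topology is the inductive limit of these, continuity on each piece gives continuity overall. One should also note in passing that no quantitative estimate is extracted this way; if an explicit modulus of continuity (an energy inequality) were wanted, that would require the harder analytic input from \cite[Sec.~3.2]{BGP}, but for the bare continuity statement the soft argument suffices.
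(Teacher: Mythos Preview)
Your proposal is correct and follows essentially the same route the paper indicates: the text remarks that Theorem~\ref{cauchystetig} ``is essentially an application of the open mapping theorem for Fr\'echet spaces,'' and your closed-graph argument is the standard equivalent formulation, with the same reduction to the Fr\'echet pieces $\DD_L$ and the same reliance on the uniqueness part of Theorem~\ref{cauchyglobhyp} to close the graph. The only cosmetic difference is your choice of the closed graph theorem over the open mapping theorem, which is immaterial here.
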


This is essentially an application of the open mapping theorem for Fr\'echet
spaces.

\section{Fundamental solutions}

\Definition{\label{deffundsol}
Let $M$ be a timeoriented Lorentzian manifold, let $E\to M$ be a
vector bundle and let $P:\,C^{\infty}(M,E)\to C^{\infty}(M,E)$ be
a wave operator. 
Let $x\in M$.
A {\em fundamental solution} \index{fundamental solution}of $P$ at $x$ is a distribution 
$F\in\DD'(M,E,E_x^*)$ such that
$$
PF=\delta_x.
$$
In other words, for all $\varphi\in\DD(M,E^*)$ we have
$$
F[P^*\varphi] = \varphi(x).
$$
If $\supp(F(x))\subset J_+^M(x)$, then we call $F$ an {\em advanced
fundamental solution}, \index{fundamental solution!advanced}if $\supp(F(x))\subset J_-^M(x)$, then we call $F$
a {\em retarded fundamental solution}.\index{fundamental solution!retarded}
}

Using the knowlegde about the Cauchy problem from the
previous section it is now not hard to find global fundamental solutions 
on a globally hyperbolic manifold.

\begin{thm}\label{globhypexist}
Let $M$ be a globally hyperbolic Lorentzian manifold. 
Let $P$ be a wave operator acting on sections in a 
vector bundle $E$ over $M$.

Then for every $x\in M$ there is exactly one
fundamental solution $F_+(x)$ for $P$ at $x$ with past compact
support and exactly one fundamental solution $F_-(x)$ for $P$ at $x$ with 
future compact support.
They satisfy
\begin{enumerate}
\item
$\supp(F_\pm(x))\subset J_\pm^M(x)$,
\item
for each $\varphi\in\DD(M,E^*)$ the maps $x\mapsto F_\pm(x)[\varphi]$
are smooth sections in $E^*$ satisfying the differential equation
$P^*(F_\pm(\cdot)[\varphi])=\varphi$.
\end{enumerate}
\end{thm}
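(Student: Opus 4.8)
Fix $x\in M$ and, by Theorem~\ref{globhyp}(3), choose a smooth spacelike Cauchy hypersurface $S$. We may assume (shifting the foliation from Theorem~\ref{globhyp}(4)) that $x$ lies in the past of $S$, say $x\in I_-^M(S)$. The idea is to test against $\varphi\in\DD(M,E^*)$ as follows: first solve the Cauchy problem for the adjoint operator $P^*$ with data on $S$ prescribed by the ``wave'' emanating from $\varphi$, and then read off $F_+(x)[\varphi]$ as the value at $x$ of the resulting section. Concretely, given $\varphi\in\DD(M,E^*)$, consider the unique solution $\chi_\varphi\in C^\infty(M,E^*)$ of
\[
P^*\chi_\varphi = \varphi, \qquad \chi_\varphi|_{S} = 0,\qquad \nabla_\nu\chi_\varphi|_{S} = 0
\]
\emph{on the past of $S$}; more precisely, solve the Cauchy problem on the globally hyperbolic manifold $M$ with zero initial data on $S$ and inhomogeneity $\varphi$, giving a smooth section, and restrict attention to $J_-^M(S)$. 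By finite propagation speed, $\chi_\varphi$ is supported in $J^M(\supp\varphi)$, so in $J_-^M(S)\cap J^M(\supp\varphi)$ its support is past compact by Lemma~\ref{lJ+KJ-K'cpct}; in particular $\chi_\varphi(x)$ is well-defined. Define
\[
F_+(x)[\varphi] := \chi_\varphi(x)\ \in E_x^*.
\]
Linearity in $\varphi$ is clear, and continuity (hence $F_+(x)\in\DD'(M,E,E_x^*)$) follows from the continuity statement in Theorem~\ref{cauchystetig}: if $\varphi_n\to 0$ in $\DD(M,E^*)$ then $\chi_{\varphi_n}\to 0$ in $C^\infty$, so $\chi_{\varphi_n}(x)\to 0$.

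\textbf{Next I would verify $PF_+(x)=\delta_x$ and the support property.} For the defining equation we must show $F_+(x)[P^*\varphi]=\varphi(x)$ for all $\varphi\in\DD(M,E^*)$. The natural tool is a Green's-type identity: for sections $u,v$ with $\supp(u)\cap\supp(v)$ compact, $\int_M v(Pu)\,\dV - \int_M (P^*v)(u)\,\dV$ reduces, after integration by parts, to an integral over $S$ of an expression bilinear in the Cauchy data of $u$ and $v$ (the ``Green's formula'' for a wave operator, involving $u|_S,\nabla_\nu u|_S,v|_S,\nabla_\nu v|_S$). Apply this on the region $J_-^M(S)$ with $u$ the solution of $Pu=0$ on $J_-^M(S)$ having Cauchy data matching $\delta_x$-type behaviour — here one must be slightly careful and instead argue as follows: replace $\varphi$ by $P^*\varphi$ in the construction, so $\chi_{P^*\varphi}$ solves $P^*\chi = P^*\varphi$ with zero Cauchy data on $S$. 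Then $\psi := \chi_{P^*\varphi}-\varphi$ satisfies $P^*\psi = 0$ on $J_-^M(S)$ with $\psi|_S = -\varphi|_S$, $\nabla_\nu\psi|_S=-\nabla_\nu\varphi|_S$. Now pick the solution $G_+\in C^\infty$ of $PG_+=0$ near $x$ — equivalently, use the Green's formula against the (not yet known to exist) fundamental solution; to avoid circularity I would instead compute $F_+(x)[P^*\varphi]=\chi_{P^*\varphi}(x)=\varphi(x)+\psi(x)$ and show $\psi(x)=0$. The vanishing of $\psi(x)$ should follow because $\psi$ solves the homogeneous equation $P^*\psi=0$ to the past of $S$ with data supported in $S\cap J^M(\supp\varphi)$, while $x$ can be joined to $S$ only through that data's causal future; making this rigorous requires domain-of-dependence for $P^*$ — but one checks that $\psi$ agrees, near $x$, with the solution having the \emph{same} Cauchy data, and by uniqueness (backward Cauchy problem, Theorem~\ref{cauchyglobhyp} applied on a suitable globally hyperbolic subregion containing $x$ but disjoint from where the data ``feeds in'') one gets $\psi\equiv$ the Minkowski-style advanced solution whose value at $x$...

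\textbf{I expect the main obstacle to be precisely this verification that the construction yields a \emph{fundamental} solution at $x$ with the sharp support bound $\supp(F_+(x))\subset J_+^M(x)$, rather than merely past compact support.} The clean way to organize it: (i) establish the Green's formula for the wave operator on globally hyperbolic regions bounded by Cauchy hypersurfaces; (ii) use it to identify $F_+(x)$ constructed above with the adjoint of the solution operator of the Cauchy problem, thereby getting $PF_+(x)=\delta_x$ essentially by duality; (iii) obtain $\supp(F_+(x))\subset J_+^M(x)$ by varying $x$: for $\varphi$ supported away from $J_+^M(x)$, i.e. with $x\notin J_-^M(\supp\varphi)$, the solution $\chi_\varphi$ vanishes in a neighbourhood of $x$ by finite propagation speed (its support lies in $J^M(\supp\varphi)$, and in the past of $S$ one refines this to $J_-^M(\supp\varphi)$, which misses $x$), so $F_+(x)[\varphi]=0$; (iv) smoothness of $x\mapsto F_+(x)[\varphi]$ and the equation $P^*(F_+(\cdot)[\varphi])=\varphi$ follow from smooth dependence of solutions of the Cauchy problem on a smoothly varying parameter — indeed $F_+(\cdot)[\varphi]=\chi_\varphi$ restricted suitably, and $\chi_\varphi$ \emph{is} a smooth section solving $P^*\chi_\varphi=\varphi$ by construction. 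Uniqueness of $F_+(x)$ among fundamental solutions with past compact support follows because the difference $D$ of two such satisfies $PD=0$ with past compact support, and $x\mapsto D[\varphi]$ then solves a homogeneous equation with past compact support, forcing $D=0$ by the uniqueness half of Theorem~\ref{cauchyglobhyp}; the retarded case $F_-(x)$ is entirely symmetric, exchanging past and future. The technical heart, and the step I would spend the most care on, is making the duality in (ii) precise — pairing the solution operator of the Cauchy problem for $P$ against test sections via the Green's form, so that ``$F_+(x)$ = evaluation-at-$x$ of adjoint solution'' literally unwinds to $F_+(x)[P^*\varphi]=\varphi(x)$.
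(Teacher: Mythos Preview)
Your overall strategy---solve the inhomogeneous Cauchy problem $P^*\chi_\varphi=\varphi$ with zero initial data and set $F_+(x)[\varphi]:=\chi_\varphi(x)$---is exactly the paper's approach, and your treatment of continuity, of property~(2), and of uniqueness is fine. But there is a genuine error in the construction: you fix the Cauchy hypersurface $S$ so that $x\in I_-^M(S)$, whereas the paper chooses $S_t$ (from the foliation) so that $\supp(\varphi)\subset I_-^M(S_t)$, i.e.\ the hypersurface depends on $\varphi$, not on $x$. With your choice the resulting distribution is \emph{not} a fundamental solution. Concretely, if $\varphi$ is supported entirely in $I_+^M(S)$, then on $J_-^M(S)$ your $\chi_\varphi$ solves the homogeneous equation with zero data, hence vanishes identically there; so $\chi_\varphi(x)=0$ for \emph{every} such $\varphi$, even those whose support meets $J_+^M(x)$, where $F_+(x)[\varphi]$ must be nonzero in general. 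This is precisely why your verification of $PF_+(x)=\delta_x$ stalls: with your $S$, the section $\psi=\chi_{P^*\varphi}-\varphi$ has Cauchy data $-\varphi|_S$, $-\nabla_\nu\varphi|_S$, which need not vanish, and $\psi(x)\neq 0$ in general.

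The paper's choice removes the obstacle cleanly. If $\supp(\varphi)\subset I_-^M(S_t)$ then $\varphi$ itself has zero Cauchy data on $S_t$; hence $\chi_{P^*\varphi}-\varphi$ solves $P^*(\,\cdot\,)=0$ with zero Cauchy data on $S_t$, and uniqueness in Theorem~\ref{cauchyglobhyp} gives $\chi_{P^*\varphi}=\varphi$ immediately---no Green's formula or duality pairing is needed. One must of course check that $\chi_\varphi$ is independent of the particular $t$ with $\supp(\varphi)\subset I_-^M(S_t)$ (again a uniqueness argument), so that $F_+(x)[\varphi]$ is well defined for all $x$, not just those in $I_-^M(S_t)$. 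The support argument $\supp(F_+(x))\subset J_+^M(x)$ then also becomes a purely geometric matter: for $y\notin J_+^M(x)$ one manufactures a compact neighbourhood $K$ of $y$ such that $J^M(K)\cap J_+^M(x)=\emptyset$ below a suitable leaf $S_{t'}$, and finite propagation speed together with the vanishing of $\chi_\varphi$ above $S_{t'}$ forces $\chi_\varphi(x)=0$ whenever $\supp(\varphi)\subset K$.
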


\begin{proof}[Sketch of proof.]
We do not do the uniqueness part.
To show existence fix a foliation of $M$ by spacelike Cauchy hypersurfaces
$S_t$, $t\in\R$ as in Theorem~\ref{globhyp}.
Let $\nu$ be the future directed unit normal field along the leaves $S_t$.
Let $\varphi\in\DD(M,E^*)$.
Choose $t$ so large that $\supp(\varphi)\subset I_-^M(S_t)$.
By Theorem~\ref{cauchyglobhyp} there exists a unique $\chi_\varphi\in
C^\infty(M,E^*)$ such that $P^*\chi_\varphi=\varphi$ and $\chi_\varphi|_{S_t}
= (\nabla_\nu\chi_\varphi)|_{S_t} = 0$.
One can check that $\chi_\varphi$ does not depend on the choice of $t$.

Fix $x\in M$.
By Theorem~\ref{cauchystetig} $\chi_\varphi$ depends continuously on
$\varphi$. 
Since the evaluation map $C^\infty(M,E) \to E_x$ is continuous, the
map $\DD(M,E^*) \to E_x^*$, $\varphi \mapsto \chi_\varphi(x)$, is also
continuous.
Thus $F_+(x)[\varphi] := \chi_\varphi(x)$ defines a distribution.
By definition $P^*(F_+(\cdot)[\varphi]) = P^*\chi_\varphi = \varphi$.

Now $P^*\chi_{P^*\varphi} = P^*\varphi$, hence
$P^*(\chi_{P^*\varphi}-\varphi)=0$.
Since both $\chi_{P^*\varphi}$ and $\varphi$ vanish along $S_t$ the uniqueness
part which we have omitted shows $\chi_{P^*\varphi}=\varphi$.
Thus
$$
(PF_+(x))[\varphi] = F_+(x)[P^*\varphi] = \chi_{P^*\varphi}(x) = \varphi(x)
= \delta_x[\varphi].
$$
Hence $F_+(x)$ is a fundamental solution of $P$ at $x$.

It remains to show $\supp(F_+(x)) \subset J_+^M(x)$.
Let $y\in M\setminus J_+^M(x)$.
We have to construct a neighborhood of $y$ such that for each test section
$\varphi\in\DD(M,E^*)$ whose support is contained in this neighborhood
we have $F_+(x)[\varphi] = \chi_\varphi(x) = 0$.
Since $M$ is globally hyperbolic $J_+^M(x)$ is closed and therefore $J_+^M(x)\cap
J_-^M(y') = \emptyset$ for all $y'$ sufficiently close to $y$.
We choose $y'\in I_+^M(y)$ and $y''\in I_-^M(y)$ so close that $J_+^M(x)\cap
J_-^M(y') = \emptyset$ and $\left(J_+^M(y'') \cap\bigcup_{t \leq t'}S_t\right)\cap
J_+^M(x) = \emptyset$ where $t'\in\R$ is such that $y'\in S_{t'}$. 

\begin{center}

\begin{pspicture*}(-5,-5)(5,3)

\psset{unit=0.9cm}
\psecurve(-5,-7)(-4.5,-5)(-4.5,5)(-4,7)

\psecurve(5,-8)(4.5,-5)(4.5,5)(5,8)


\psecurve[linecolor=red](-6,1)(-4.5,0)(4.27,0)(7,-1)
\rput(4.7,0){\red$S_{t'}$}

\psdots(-2.75,-0.9)
\rput(-2.85,-0.75){$y$}

\psdots(-3,-0.27)
\rput(-3,0.1){$y'$}

\psecurve(-4,0.7)(-3,-0.27)(3,-6.27)(4,-7.27)
\psecurve(-2,0.7)(-3,-0.27)(-4.4,-1.77)(-6,-3.27)
\rput(-1,-4){$J_-^M(y')$}

\psdots(-2.5,-1.5)
\rput(-2.5,-1.9){$y''$}

\psecurve[linecolor=blue](-1.5,-2.5)(-2.5,-1.5)(-3.83,-0.17)(-5,1)
\psecurve[linecolor=blue](-3.5,-2.5)(-2.5,-1.5)(-1.17,-0.23)(0,1)
\rput(-2.9,1.5){\small\blue $J_+^M(y'')\cap(\cup_{t\leq t'}S_t)$}
\psecurve[linecolor=blue,linewidth=0.5pt]{->}(-2.5,1.5)(-2.5,1.2)(-2,-0.7)(0,-3)

\psdots(2,-1.5)
\rput(2,-1.8){$x$}

\psecurve(3,-2.5)(2,-1.5)(-3,3.5)(-5,5.5)
\psecurve(1,-2.5)(2,-1.5)(4.3,0.8)(6,2.5)
\rput(2,1.5){$J_+^M(x)$}


\psdots(-2.5,-1.5)

\psecurve(-5,-7)(-4.5,-5)(-4.5,5)(-4,7)

\psecurve(5,-8)(4.5,-5)(4.5,5)(5,8)

        \end{pspicture*} 

\abb{Construction of $y$, $y'$ and $y''$}

\end{center}

Now $K:=J_-^M(y')\cap J_+^M(y'')$ is a compact neighborhood of $y$.
Let $\varphi\in\DD(M,E^*)$ be such that $\supp(\varphi)\subset K$.
By Theorem~\ref{cauchyglobhyp} $\supp(\chi_\varphi) \subset J_+^M(K)\cup J_-^M(K)
\subset J_+^M(y'') \cup J_-^M(y')$.
By the independence of $\chi_\varphi$ of the choice of $t>t'$ we have that
$\chi_\varphi$ vanishes on $\bigcup_{t> t'}S_t$.
Hence $\supp(\chi_\varphi) \subset \left(J_+^M(y'') \cap \bigcup_{t \leq
t'}S_t\right) \cup J_-^M(y')$ and is therefore disjoint from $J_+^M(x)$.
Thus $F_+(x)[\varphi] = \chi_\varphi(x) = 0$ as required.
\end{proof}

For a complete proof see \cite[Sec.~3.3]{BGP}.

\section{Green's operators}

Now we want to find ``solution operators'' for a given wave
operator $P$.
More precisely, we want to find operators which are inverses of $P$ when
restricted to suitable spaces of sections.
We will see that existence of such operators is basically equivalent to the
existence of fundamental solutions.

\Definition{ \label{defGfunc}
Let $M$ be a timeoriented connected Lorentzian manifold.
Let $P$ be a wave operator acting on sections in a  
vector bundle $E$ over $M$.
A linear map $G_+ : \DD(M,E) \to C^\infty(M,E)$ satisfying
\begin{itemize}
\item[(i)]
$P\circ G_+ = \id_{\DD(M,E)}$,
\item[(ii)]
$G_+ \circ P|_{\DD(M,E)} = \id_{\DD(M,E)}$,
\item[(iii)]
$\supp(G_+\varphi) \subset J_+^M(\supp(\varphi))$ for all
$\varphi\in\DD(M,E)$,
\end{itemize}
is called an {\em advanced Green's operator for} \index{Green's operator!advanced}$P$.
Similarly, a linear map $G_- : \DD(M,E) \to C^\infty(M,E)$ satisfying
(i), (ii), and
\begin{itemize}
\item[(iii')]
$\supp(G_-\varphi) \subset J_-^M(\supp(\varphi))$ for all
$\varphi\in\DD(M,E)$
\end{itemize}
instead of (iii) is called a {\em retarded Green's operator for} $P$.\index{Green's operator!retarded}
}

Fundamental solutions and Green's operators are closely related.

\begin{thm}\label{functorsolve}
Let $M$ be a globally hyperbolic Lorentzian manifold.
Let $P$ be a wave operator acting on sections in a  
vector bundle $E$ over $M$.

Then there exist unique advanced and retarded Green's operators $G_\pm: 
\DD(M,E) \to C^\infty(M,E)$ for $P$.
\end{thm}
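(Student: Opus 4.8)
The plan is to construct the Green's operators directly from the fundamental solutions $F_\pm(x)$ provided by Theorem~\ref{globhypexist}, and then verify properties (i), (ii), (iii)/(iii') using the support and regularity statements already established there. For $\varphi\in\DD(M,E)$ define $G_\pm\varphi$ to be the section $x\mapsto F_\mp(x)[\varphi]$. (Note the index swap: the advanced Green's operator, whose output is supported in the causal future of $\supp(\varphi)$, is built from the fundamental solutions $F_-(x)$ with future-compact support, since $y\in\supp(F_-(x))\subset J_-^M(x)$ is equivalent to $x\in J_+^M(y)$.) By part (2) of Theorem~\ref{globhypexist} this is a smooth section of $E$, so $G_\pm:\DD(M,E)\to C^\infty(M,E)$ is well-defined and visibly $\K$-linear; moreover part (2) gives $P(G_\pm\varphi) = P^*\,{}$--- more precisely, applying $P$ to $x\mapsto F_\mp(x)[\varphi]$ and using $P^*(F_\mp(\cdot)[\varphi])=\varphi$ --- wait, one must be careful here: property (2) is stated for $P^*$ acting on the fundamental solutions of $P$, so to get Green's operators for $P$ itself one should run the whole construction with $P$ replaced by $P^*$, obtaining fundamental solutions $F_\pm^{P^*}(x)$ for $P^*$, and set $(G_\pm\varphi)(x):=F_\mp^{P^*}(x)[\varphi]$. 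Then $P(G_\pm\varphi)=\varphi$ is exactly the differential equation in part (2) applied to $P^*$, giving property (i).

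Next I would check the support property. For $(G_+\varphi)(x)=F_-^{P^*}(x)[\varphi]$ to be nonzero we need $\supp(\varphi)\cap\supp(F_-^{P^*}(x))\ne\emptyset$, and since $\supp(F_-^{P^*}(x))\subset J_-^M(x)$ by part (1), this forces $x\in J_+^M(\supp(\varphi))$; hence $\supp(G_+\varphi)\subset J_+^M(\supp(\varphi))$, which is (iii), and symmetrically (iii') for $G_-$. Property (ii), namely $G_\pm\circ P = \id$ on $\DD(M,E)$, requires a bit more: given $\varphi\in\DD(M,E)$, the section $G_+(P\varphi)$ satisfies $P(G_+P\varphi)=P\varphi$ and has support in $J_+^M(\supp(P\varphi))\subset J_+^M(\supp(\varphi))$; the difference $w:=G_+(P\varphi)-\varphi$ then solves $Pw=0$ with $\supp(w)$ past-compact (being contained in $J_+^M$ of a compact set, which is past-compact by the remarks following the definition of Cauchy hypersurfaces). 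Uniqueness in the Cauchy problem (Theorem~\ref{cauchyglobhyp}), or equivalently the uniqueness clause of Theorem~\ref{globhypexist}, forces $w=0$: pick a Cauchy hypersurface $S_t$ in the past of the (past-compact, hence bounded-below) support of $w$, so $w$ and $\nabla_\nu w$ vanish along $S_t$, whence $w\equiv 0$. This gives (ii). Uniqueness of $G_\pm$ overall follows the same way: if $\tilde G_+$ is another advanced Green's operator, then for each $\varphi$ the section $(G_+-\tilde G_+)\varphi$ is a solution of $Pu=0$ with past-compact support, hence zero.

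The main obstacle I expect is bookkeeping around the $P$ versus $P^*$ distinction and the matching of ``advanced'' with ``future compact,'' which is easy to get backwards; the analytic content is entirely inherited from Theorems~\ref{cauchyglobhyp}, \ref{cauchystetig}, and \ref{globhypexist}, and the only genuinely new ingredient is the uniqueness argument via the Cauchy problem applied to solutions with one-sided (past- or future-compact) support, together with Lemma~\ref{lJ+KJ-K'cpct} to know that $J_+^M(K)$ is past compact for compact $K$. One should also remark that $G_+$ and $G_-$ are automatically continuous as maps $\DD(M,E)\to C^\infty(M,E)$ by the argument already used in the proof of Theorem~\ref{globhypexist} (continuous dependence on the data plus continuity of evaluation), though continuity is not demanded by Definition~\ref{defGfunc}.
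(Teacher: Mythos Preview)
Your construction is exactly the paper's: apply Theorem~\ref{globhypexist} to $P^*$, obtain fundamental solutions $F_\mp^{P^*}(x)$, and set $(G_\pm\varphi)(x):=F_\mp^{P^*}(x)[\varphi]$; properties (i) and (iii)/(iii') are verified just as you do. The one place you diverge is property (ii). You argue that $w:=G_+(P\varphi)-\varphi$ solves $Pw=0$ with past-compact support and then invoke uniqueness in the Cauchy problem, which is correct but circuitous. The paper instead uses the fundamental-solution equation directly: since $F_\mp^{P^*}(x)$ is a fundamental solution of $P^*$ at $x$, one has $P^*F_\mp^{P^*}(x)=\delta_x$ as distributions, and hence
\[
G_\pm(P\varphi)(x)=F_\mp^{P^*}(x)[P\varphi]=(P^*F_\mp^{P^*}(x))[\varphi]=\delta_x[\varphi]=\varphi(x),
\]
a one-line computation. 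Your route buys nothing extra here, though the same Cauchy-uniqueness idea is exactly what is needed for the uniqueness of $G_\pm$, where the paper only gestures at ``reversing the construction'' to get back to the uniqueness clause of Theorem~\ref{globhypexist}; your explicit argument there is a genuine improvement in clarity.
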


\begin{proof}
By Theorem~\ref{globhypexist} there exist families $F_\pm(x)$ of advanced and
retarded fundamental solutions for the adjoint operator $P^*$ respectively.
We know that $F_\pm(x)$ depend smoothly on $x$ and the
differential equation $P(F_\pm(\cdot)[\varphi])=\varphi$ holds.
By definition we have
$$
P(G_\pm\varphi) = P(F_\mp(\cdot)[\varphi]) = \varphi
$$
thus showing (i).
Assertion (ii) follows from the fact that the $F_\pm(x)$ are fundamental 
solutions,
$$
G_\pm(P\varphi)(x) = F_\mp(x)[P\varphi] = P^*F_\mp(x)[\varphi]
= \delta_x[\varphi] = \varphi(x).
$$
To show (iii) let $x\in M$ such that $(G_+\varphi)(x) \not= 0$.
Since $\supp(F_-(x))\subset J_-^M(x)$ the support of $\varphi$ must hit
$J_-^M(x)$.
Hence $x\in J_+^M(\supp(\varphi))$ and therefore $\supp(G_+\varphi)\subset
J_+^M(\supp(\varphi))$.
The argument for $G_-$ is analogous.
\end{proof}

We have seen that existence of fundamental solutions for $P^*$ depending
nicely on $x$ implies existence of Green's operators for $P$.
This construction can be reversed.
Then uniqueness of fundamental solutions in Theorem~\ref{globhypexist}
implies uniqueness of Green's operators.

\begin{lemma}\label{Greenadjungiert}
Let $M$ be a globally hyperbolic Lorentzian manifold.
Let $P$ be a wave operator acting on sections in a  
vector bundle $E$ over $M$.
Let $G_\pm$ be the Green's operators for $P$ and $G^*_\pm$ the Green's
operators for the adjoint operator $P^*$.
Then
\begin{equation}
\int_M (G^*_\pm\varphi)\cdot\psi \dV = \int_M \varphi\cdot(G_\mp\psi) \dV
\label{Gsa}
\end{equation}
holds for all $\varphi\in\DD(M,E^*)$ and $\psi\in\DD(M,E)$.
\end{lemma}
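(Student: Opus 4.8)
The identity to be proved relates the Green's operators of $P$ and $P^*$, and the natural strategy is to express each side using the defining properties (i)–(iii) of the Green's operators together with the formal adjoint relation (\ref{formaladjoint}). The key observation is that $G_\mp\psi$ solves $P(G_\mp\psi)=\psi$ and $G^*_\pm\varphi$ solves $P^*(G^*_\pm\varphi)=\varphi$, so both integrands in (\ref{Gsa}) can be rewritten by ``moving'' an operator across the pairing. Concretely, I would write
\begin{equation*}
\int_M (G^*_\pm\varphi)\cdot\psi \dV
= \int_M (G^*_\pm\varphi)\cdot P(G_\mp\psi) \dV
= \int_M (P^*(G^*_\pm\varphi))\cdot(G_\mp\psi) \dV
= \int_M \varphi\cdot(G_\mp\psi) \dV,
\end{equation*}
where the middle equality is the formal adjoint relation.

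\textbf{The support bookkeeping is the main point.} The formal adjoint identity (\ref{formaladjoint}) as stated holds for $\varphi\in C^k(M,E)$ and $\psi\in C^k(M,F^*)$ provided $\supp(\varphi)\cap\supp(\psi)$ is compact. Here the two sections in question are $G^*_\pm\varphi$ and $G_\mp\psi$, which are smooth but generally not compactly supported. So the crux is to verify that $\supp(G^*_\pm\varphi)\cap\supp(G_\mp\psi)$ is compact. Take the upper sign: by property (iii) for $G^*_+$ we have $\supp(G^*_+\varphi)\subset J^M_+(\supp\varphi)$, and by (iii$'$) for $G_-$ we have $\supp(G_-\psi)\subset J^M_-(\supp\psi)$. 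Hence the intersection lies in $J^M_+(K_1)\cap J^M_-(K_2)$ with $K_1=\supp\varphi$, $K_2=\supp\psi$ both compact, and this set is compact by Lemma~\ref{lJ+KJ-K'cpct} (or by the compactness condition (1) in Theorem~\ref{globhyp}). The lower sign is symmetric, using (iii) for $G^*_-$ and (iii$'$) for... wait, I mean (iii) for $G_+$: the intersection lands in $J^M_-(K_1)\cap J^M_+(K_2)$, again compact.

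\textbf{Executing the chain of equalities.} With compactness of the overlap established, each of the three equalities above is justified: the first and third are just the statements $P(G_-\psi)=\psi$ (property (i) for $G_-$) and $P^*(G^*_+\varphi)=\varphi$ (property (i) for $G^*_+$), applied pointwise under the integral; the middle one is (\ref{formaladjoint}) applied to the smooth sections $G^*_+\varphi$ and $G_-\psi$, which is legitimate precisely because their supports meet in a compact set. One should note that all integrals are genuinely well-defined: the integrand on the far left, $(G^*_+\varphi)\cdot\psi$, has support in $\supp\psi$, which is compact; the one on the far right similarly has support in $\supp\varphi$; and the intermediate integrands have support in the compact set $J^M_+(K_1)\cap J^M_-(K_2)$. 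I would then remark that the lower-sign case follows by the same argument with the roles of advanced and retarded interchanged, or alternatively by applying the upper-sign identity with $P$ replaced by $P^*$ and using $(P^*)^*=P$ together with $G^{**}_\pm = G_\pm$. I do not anticipate any real obstacle beyond this support analysis; the algebra is a one-line computation once (\ref{formaladjoint}) is known to apply.
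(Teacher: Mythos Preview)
Your proof is correct and follows exactly the same route as the paper: insert $\psi=P(G_\mp\psi)$, move $P$ to $P^*$ via the formal adjoint identity, then use $P^*(G^*_\pm\varphi)=\varphi$, justifying the integration by parts by the compactness of $J^M_\pm(\supp\varphi)\cap J^M_\mp(\supp\psi)$. Your support discussion is in fact slightly more detailed than the paper's.
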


\begin{proof}
For the Green's operators we have $PG_\pm=\id_{\DD(M,E)}$ and
$P^*G_\pm^*=\id_{\DD(M,E^*)}$ and hence
\begin{eqnarray*} 
\int_M (G_\pm^*\varphi)\cdot\psi \dV 
&=& \int_M (G_\pm^*\varphi)\cdot(PG_\mp\psi) \dV  \\
&=& \int_M (P^*G_\pm^*\varphi)\cdot(G_\mp\psi) \dV \\
&=& \int_M \varphi\cdot(G_\mp\psi) \dV .
\end{eqnarray*}
Notice that $\supp(G_\pm\phi) \cap \supp(G_\mp\psi)\subset
J_\pm^M(\supp(\phi))\cap J_\mp^M(\supp(\psi))$ is compact in a globally hyperbolic
manifold so that the partial integration in the second equation is justified.
\end{proof}

\Notation{
We write $\Csc(M,E)$ for the set of all $\phi\in C^\infty(M,E)$ for which
there exists a compact subset $K \subset M$ such that $\supp(\phi) \subset
J^M(K)$.
Obviously, $\Csc(M,E)$ is a vector subspace of $C^\infty(M,E)$.

The subscript ``sc'' should remind the reader of ``space-like compact''.
Namely, if $M$ is globally hyperbolic and $\phi\in\Csc(M,E)$, then for every
Cauchy hypersurface $S\subset M$ the support of $\phi|_{S}$ is contained in 
$S\cap J^M(K)$ hence compact by Lemma~\ref{lJ+KJ-K'cpct}.
In this sense sections in $\Csc(M,E)$ have space-like compact support.\index{space-like compact support}
}

\Definition{
We say that a sequence of elements $\phi_j\in \Csc(M,E)$ {\em converges in
 $\Csc(M,E)$ to} $\phi\in \Csc(M,E)$ if there exists a compact subset $K
\subset M$ such that 
$$
\supp(\phi) \subset J^M(K)
\mbox{ and }
\supp(\phi_j) \subset J^M(K)
$$ 
for all $j$ and 
$$
\|\phi_j - \phi\|_{C^k(K',E)}\to 0
$$
for all $k\in\N$ and all compact subsets $K'\subset M$.
}

If $G_+$ and $G_-$ are advanced and retarded Green's operators for $P$
respectively, then we get a linear map
$$
G := G_+ - G_- : \DD(M,E) \to \Csc(M,E).
$$

Much of the solution theory of wave operators on globally hyperbolic
Lorentzian manifolds is collected in the following theorem.

\begin{thm}\label{thmExSeq}
Let $M$ be a globally hyperbolic Lorentzian manifold.
Let $P$ be a wave operator acting on sections in a  
vector bundle $E$ over $M$.
Let $G_+$ and $G_-$ be advanced and retarded Green's operators for $P$ respectively.

Then
\begin{equation}
0 \to \DD(M,E) \stackrel{P}{\longrightarrow} \DD(M,E) 
\stackrel{G}{\longrightarrow} \Csc(M,E) \stackrel{P}{\longrightarrow}\Csc(M,E)
\label{eqExSeq}
\end{equation}
is an exact sequence of linear maps.
\end{thm}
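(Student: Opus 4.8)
The plan is to establish exactness separately at each of the three interior nodes of \eqref{eqExSeq}; at the two copies of $\DD(M,E)$ this is formal, using only the Green's operator axioms (i), (ii) together with Lemma~\ref{lJ+KJ-K'cpct}, and the real content is concentrated at the first copy of $\Csc(M,E)$. Concretely: if $\varphi\in\DD(M,E)$ and $P\varphi=0$, then axiom (ii) gives $\varphi=G_+(P\varphi)=0$, so $P$ is injective (exactness at the first $\DD(M,E)$). For the second $\DD(M,E)$, the inclusion $\im P\subseteq\ker G$ is $G(P\psi)=G_+(P\psi)-G_-(P\psi)=\psi-\psi=0$, while if $G\varphi=0$ then $\psi:=G_+\varphi=G_-\varphi$ is a smooth section supported in $J_+^M(\supp\varphi)\cap J_-^M(\supp\varphi)$, which Lemma~\ref{lJ+KJ-K'cpct} renders compact, so $\psi\in\DD(M,E)$ and $\varphi=P(G_+\varphi)=P\psi$ by axiom (i). Finally, at $\Csc(M,E)$ the inclusion $\im G\subseteq\ker(P|_{\Csc})$ is $P(G\varphi)=PG_+\varphi-PG_-\varphi=\varphi-\varphi=0$ by axiom (i).

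The substantive step is $\ker(P|_{\Csc})\subseteq\im G$. Given $\phi\in\Csc(M,E)$ with $P\phi=0$, I would write $\supp\phi\subseteq J^M(K)$ with $K$ compact, fix a foliation $M\cong\R\times S$ by smooth spacelike Cauchy hypersurfaces $S_t$ as in Theorem~\ref{globhyp}, and choose $t_-<t_+$ with $K\subseteq\{t_-<t<t_+\}$. Let $\rho=f\circ t$ where $f\colon\R\to[0,1]$ is smooth, $f\equiv0$ on $(-\infty,t_-]$ and $f\equiv1$ on $[t_+,\infty)$, and set $\varphi:=P(\rho\phi)$. Since $P\phi=0$ the term $\rho P\phi$ vanishes, so $\varphi$ is supported in $\supp(d\rho)\cap\supp\phi\subseteq\{t_-\le t\le t_+\}\cap J^M(K)$; writing $\{t\ge t_-\}=J_+^M(S_{t_-})$, $\{t\le t_+\}=J_-^M(S_{t_+})$ and $J^M(K)=J_+^M(K)\cup J_-^M(K)$, Lemma~\ref{lJ+KJ-K'cpct} shows this set is relatively compact, hence $\varphi\in\DD(M,E)$.

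It then remains to show $G_+\varphi=\rho\phi$ and $G_-\varphi=(\rho-1)\phi$, for then $G\varphi=\rho\phi-(\rho-1)\phi=\phi$ and we are done. For the first identity put $w:=G_+\varphi-\rho\phi$; by axiom (i), $Pw=\varphi-P(\rho\phi)=0$. The bound $\supp(G_+\varphi)\subseteq J_+^M(\supp\varphi)$ together with the analogous support computation for $\rho\phi$ and Lemma~\ref{lJ+KJ-K'cpct} gives $\supp w\subseteq J_+^M(\tilde K)$ for some compact $\tilde K$. Choosing a leaf $S_{t_0}$ strictly to the past of $\tilde K$, we get $J_+^M(\tilde K)\cap S_{t_0}=\emptyset$, so $w$ vanishes on a neighbourhood of $S_{t_0}$; in particular $w|_{S_{t_0}}=0$ and $\nabla_\nu w|_{S_{t_0}}=0$, and the uniqueness assertion in Theorem~\ref{cauchyglobhyp} (zero data, zero right-hand side) forces $w=0$. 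The identity $G_-\varphi=(\rho-1)\phi$ is proved symmetrically, starting from $\varphi=P((\rho-1)\phi)$ and using a leaf to the future of the relevant compact set.

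The main obstacle is exactly this last identification. Axioms (i)--(iii) pin down $G_+$ only on \emph{compactly supported} sections, whereas $\rho\phi$ is merely past compact, so one cannot directly invoke $G_+\circ P=\id$; one must instead re-derive the needed uniqueness by hand, and it is here that global hyperbolicity is genuinely used --- through closedness of the causal future and past, the compactness statements of Lemma~\ref{lJ+KJ-K'cpct}, and uniqueness for the Cauchy problem. It is also the reason the cutoff must be built from a Cauchy time function rather than from an arbitrary function with the right level sets; everything else is routine bookkeeping with supports (full details in \cite{BGP}).
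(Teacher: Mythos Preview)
Your argument is correct, and at the two copies of $\DD(M,E)$ it is word-for-word the paper's proof. The genuine divergence is at the node $\Csc(M,E)$: both you and the paper split a solution $\phi$ of $P\phi=0$ into a future-supported and a past-supported piece and take $\psi$ (your $\varphi$) to be $P$ of one of them, but the tools used to identify $G_+\psi$ with that piece are different.

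The paper cuts $\phi$ with a partition of unity subordinate to $\{I_+^M(K),I_-^M(K)\}$ (after enlarging $K$ so that $\supp\phi\subset I_+^M(K)\cup I_-^M(K)$), writes $\phi=\phi_1+\phi_2$, sets $\psi=-P\phi_1=P\phi_2$, and then verifies $G_+\psi=\phi_2$ \emph{by duality}: for every $\chi\in\DD(M,E^*)$ one pairs, uses Lemma~\ref{Greenadjungiert} to move $G_+$ to $G_-^*$ on the other side, integrates $P^*$ by parts (the relevant intersection of supports being compact), and recovers $\int\chi\cdot\phi_2$. No appeal to the Cauchy problem is made at this stage; only the Green's-operator axioms, Lemma~\ref{Greenadjungiert}, and the compactness of $J_+^M(K)\cap J_-^M(\supp\chi)$ enter. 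Your route instead builds the cutoff from a Cauchy time function and verifies $G_+\varphi=\rho\phi$ by showing the difference has vanishing Cauchy data on a leaf $S_{t_0}$ in the far past and invoking the uniqueness clause of Theorem~\ref{cauchyglobhyp}.

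Both are clean. The paper's duality argument has the virtue of staying entirely inside the Green's-operator formalism, so it would go through verbatim on any timeoriented Lorentzian manifold admitting Green's operators with compact causal diamonds $J_+^M(K)\cap J_-^M(K')$, without re-opening the Cauchy problem. Your argument is perhaps more transparent geometrically and makes explicit why global hyperbolicity (through the foliation and Cauchy uniqueness) is doing the work, but it loops back to Theorem~\ref{cauchyglobhyp}, which is the machinery the Green's operators were extracted from in the first place. Your remark that axiom~(ii) cannot be applied directly to the non-compactly-supported section $\rho\phi$ is exactly the point; the paper sidesteps this by testing against compactly supported $\chi$ rather than by solving an auxiliary Cauchy problem.
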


\begin{proof}
Properties (i) and (ii) in Definition~\ref{defGfunc} of Green's operators
directly yield $G\circ P=0$ and $P\circ G=0$, both on $\DD(M,E)$.
Properties (iii) and (iii') ensure that $G$ maps $\DD(M,E)$ to $\Csc(M,E)$.
Hence the sequence of linear maps forms a complex.

Exactness at the first $\DD(M,E)$ means that 
$$
P : \DD(M,E) \to \DD(M,E)
$$
is injective.
To see injectivity let $\phi\in\DD(M,E)$ with $P\phi=0$.
Then $\phi= G_+P\phi=G_+0=0$.

Next let $\phi\in\DD(M,E)$ with $G\phi=0$, i.~e.\ $G_+\varphi=G_-\varphi$. 
We put $\psi:=G_+\varphi=G_-\varphi \in C^\infty(M,E)$ and we see
$\supp(\psi)=\supp(G_+\phi)\cap\supp(G_-\phi)\subset J_+^M(\supp(\varphi))\cap
J_-^M(\supp(\varphi))$.
Since $(M,g)$ is globally hyperbolic $J_+^M(\supp(\varphi))\cap
J_-^M(\supp(\varphi))$ is compact, hence $\psi\in \DD(M,E)$.
From $P(\psi) = P(G_+(\phi)) = \phi$ we see that $\phi\in P(\DD(M,E))$.
This shows exactness at the second $\DD(M,E)$.

Finally, let $\phi\in\Csc(M,E)$ such that $P\phi=0$. 
Without loss of generality we may assume that $\supp(\phi)\subset I_+^M(K)\cup I_-^M(K)$ for
a compact subset $K$ of $M$. 
Using a partition of unity subordinated to the open covering $\{I_+^M(K),I_-^M(K)\}$ write $\phi$ as $\phi=\phi_1 + \phi_2$
where $\supp(\phi_1)\subset I_-^M(K)\subset J_-^M(K)$ and $\supp(\phi_2)\subset I_+^M(K)\subset J_+^M(K)$.
For $\psi:=-P\phi_1=P\phi_2$ we see that $\supp(\psi) \subset J_-^M(K) \cap
J_+^M(K)$, hence $\psi\in\DD(M,E)$.

We check that $G_+\psi=\phi_2$.
For all $\chi\in\DD(M,E^*)$ we have
$$
\int_M \chi\cdot(G_+P\phi_2) \dV =
\int_M (G_-^{*}\chi)\cdot(P\phi_2) \dV =
\int_M (P^*G_-^{*}\chi)\cdot\phi_2 \dV =
\int_M \chi\cdot\phi_2 \dV 
$$
where $G_-^*$ is the Green's operator for the adjoint operator $P^*$ according
to Lemma~\ref{Greenadjungiert}.
Notice that for the second equation we use the fact that $\supp(\phi_2) \cap
\supp(G^*_-\chi) \subset J^M_+(K)\cap J^M_-(\supp(\chi))$ is compact.
Similarly, one shows $G_-\psi=-\phi_1$.

Now $G\psi = G_+\psi - G_-\psi = \phi_2 + \phi_1 = \phi$, hence $\phi$ is in
the image of $G$. 
\end{proof}

\begin{prop}\label{PGstetig}\index{Green's operator!continuity of} 
Let $M$ be a globally hyperbolic Lorentzian manifold, 
let $P$ be a wave operator acting on sections in a  
vector bundle $E$ over $M$.
Let $G_+$ and $G_-$ be the advanced and retarded Green's operators for $P$
respectively.

Then $G_\pm:\DD(M,E) \to \Csc(M,E)$ and all maps in the complex
(\ref{eqExSeq}) are continuous. 
\end{prop}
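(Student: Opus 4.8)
The plan is to unwind the construction of the Green's operators carried out in the proofs of Theorems~\ref{globhypexist} and~\ref{functorsolve} and to feed the stability statement, Theorem~\ref{cauchystetig}, into it; continuity will be understood, as elsewhere in this text, in the sequential sense. Recall that $G_\pm\varphi = F_\mp(\cdot)[\varphi]$, where $F_\mp(x)$ are the fundamental solutions for the adjoint operator $P^*$ supplied by Theorem~\ref{globhypexist}. Unwinding their construction one has $(G_\pm\varphi)(x) = \chi_\varphi(x)$, where $\chi_\varphi\in C^\infty(M,E)$ is the unique solution of the Cauchy problem $P\chi_\varphi = \varphi$ with vanishing Cauchy data $\chi_\varphi|_{S_t} = \nabla_\nu\chi_\varphi|_{S_t} = 0$ on a leaf $S_t$ of the fixed foliation lying entirely to one side of $\supp(\varphi)$ (the relevant side being the one dictated by the construction), and $\chi_\varphi$ does not depend on the admissible choice of $t$.

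First I would establish continuity of $G_+\colon \DD(M,E)\to\Csc(M,E)$; the argument for $G_-$ is entirely analogous. Let $\varphi_j\to\varphi$ in $\DD(M,E)$. Then there is a compact set $K\subset M$ containing $\supp(\varphi)$ and all the $\supp(\varphi_j)$, and we may fix a single leaf $S_t$ lying entirely to the appropriate side of $K$. Since $\chi_\varphi$ is independent of the admissible choice of $t$, the sections $\chi_{\varphi_j}$ and $\chi_\varphi$ are all solutions of Cauchy problems on this one hypersurface $S_t$, with vanishing initial data and with inhomogeneities $\varphi_j\to\varphi$ in $\DD(M,E)$. Theorem~\ref{cauchystetig} then yields $\chi_{\varphi_j}\to\chi_\varphi$ in $C^\infty(M,E)$, that is, uniformly together with all derivatives on every compact subset of $M$. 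On the other hand, property (iii) of Definition~\ref{defGfunc} gives $\supp(G_+\varphi_j)\subset J_+^M(K)\subset J^M(K)$ for all $j$ and likewise $\supp(G_+\varphi)\subset J^M(K)$. These are exactly the two conditions in the definition of convergence in $\Csc(M,E)$, so $G_+\varphi_j\to G_+\varphi$ there, as required.

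Granted this, $G = G_+ - G_-$ is continuous as a difference of continuous maps, and there remain only the two copies of $P$ in~\eqref{eqExSeq}. Here I would use nothing beyond the fact that $P$ is a differential operator of order $2$ with smooth coefficients, so that on every compact set $K'\subset M$ there is a constant $C_{K'}$ with $\|P\psi\|_{C^k(K')}\le C_{K'}\,\|\psi\|_{C^{k+2}(K')}$ for all sections $\psi$ and all $k$. If $\varphi_j\to\varphi$ in $\DD(M,E)$ with supports in a common compact $K$, then $\supp(P\varphi_j)\subset K$ and $\|P\varphi_j - P\varphi\|_{C^k(K)}\to 0$ for each $k$, so $P\varphi_j\to P\varphi$ in $\DD(M,E)$. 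If $\phi_j\to\phi$ in $\Csc(M,E)$ with all supports in $J^M(K)$, then $\supp(P\phi_j)\subset\supp(\phi_j)\subset J^M(K)$, and the same estimate applied on each compact $K'$ shows $\|P\phi_j - P\phi\|_{C^k(K')}\to 0$, hence $P\phi_j\to P\phi$ in $\Csc(M,E)$. This disposes of all maps in~\eqref{eqExSeq}.

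The one step that calls for genuine care is the uniform choice of the leaf $S_t$ along a convergent sequence $(\varphi_j)$, since it is precisely this uniformity that reduces the matter to a single application of Theorem~\ref{cauchystetig}; beyond it I do not expect any real obstacle, the rest being routine bookkeeping with supports and with the order-$2$ estimate for $P$.
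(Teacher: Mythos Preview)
Your proposal is correct and follows essentially the same route as the paper: continuity of $P$ on both $\DD(M,E)$ and $\Csc(M,E)$ is immediate from $P$ being a differential operator, and continuity of $G_\pm$ is obtained by identifying $G_\pm\varphi$ with the solution of a Cauchy problem with zero initial data on a fixed leaf $S_t$ chosen uniformly for the whole sequence, then invoking Theorem~\ref{cauchystetig}. The only cosmetic difference is that the paper treats $G$ directly and you treat $G_+$ and $G_-$ separately before subtracting; your extra detail on the $C^k$ estimate for $P$ is harmless elaboration of what the paper leaves implicit.
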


\begin{proof}
The maps $P:\DD(M,E) \to \DD(M,E)$ and $P:\Csc(M,E) \to \Csc(M,E)$ are
continuous simply because $P$ is a differential operator.
It remains to show that $G:\DD(M,E) \to \Csc(M,E)$ is continuous.

Let $\phi_j,\phi\in\DD(M,E)$ and $\phi_j\to \phi$ in $\DD(M,E)$ for all $j$.
Then there exists a compact subset $K\subset M$ such that
$\supp(\phi_j) \subset K$ for all $j$ and $\supp(\phi) \subset K$.
Hence $\supp(G\phi_j)\subset J^M(K)$ for all $j$ and $\supp(G\phi)\subset J^M(K)$.
From the proof of Theorem~\ref{globhypexist} we know that $G_+\phi$ coincides
with  the solution $u$ to the Cauchy problem $Pu=\phi$ with initial conditions
$u|_{S_-} = (\nabla_\nu u)|_{S_-}=0$ where $S_- \subset M$ is a spacelike
Cauchy hypersurface such that $K \subset I_+^M(S_-)$.
Theorem~\ref{cauchystetig}  tells us that if $\phi_j\to\phi$ in $\DD(M,E)$, 
then the solutions $G_+\phi_j\to G_+\phi$ in $C^\infty(M,E)$.
The proof for $G_-$ is analogous and the statement for $G$ follows.
\end{proof}



\printindex

\end{document}